\newcounter{minutes}\setcounter{minutes}{\time}
\newcounter{hours}\setcounter{hours}{\time}
\newcommand{\R}{\mathbb{R}}
\newcommand{\B}{\mathbb{B}}
\newcommand{\D}{\mathbb{D}}
\newcommand{\C}{\mathbb{C}}
\newcommand{\Bn}{ {\mathbb{B}^n} }
\newcommand{\Rn}{ {\mathbb{R}^n} }
\newcommand{\beq}{\begin{equation}}
\newcommand{\eeq}{\end{equation}}
\DeclareMathOperator{\Real}{Re}
\DeclareMathOperator{\Imag}{Im}
\DeclareMathOperator{\arsh}{arsh}
\DeclareMathOperator{\sh}{sh}
\renewcommand{\tanh}{\operatorname{th}}
\DeclareMathOperator{\arctanh}{arth}
\date{}
\title{\bf  Quasihyperbolic metric and M\"obius transformations}
\author{Riku Kl\'en}
\author{Matti Vuorinen}
\author{Xiaohui Zhang }
\address{Department of Mathematics and Statistics, University of Turku, 20014 Turku,
Finland} \email{ripekl@utu.fi, vuorinen@utu.fi, xiazha@utu.fi}
\newtheorem{theorem}[equation]{Theorem}
\newtheorem{lemma}[equation]{Lemma}
\newtheorem{proposition}[equation]{Proposition}
\newtheorem{corollary}[equation]{Corollary}
\newtheorem{conjecture}[equation]{Conjecture}
\numberwithin{equation}{section}
\theoremstyle{remark}
\newtheorem{remark}[equation]{Remark}
\begin{document}

\begin{abstract}
An improved version of quasiinvariance property of the quasihyperbolic metric
under M\"obius transformations of the unit ball in ${\mathbb R}^n, n \ge 2,$ is given. Next,
a quasiinvariance property, sharp in a local sense, of the quasihyperbolic metric
under quasiconformal mappings is proved.
Finally, several inequalities between the quasihyperbolic metric and other commonly used metrics such as the hyperbolic metric of the unit ball and the chordal metric are established.
\end{abstract}

\def\thefootnote{}
\footnotetext{ \texttt{\tiny File:~\jobname .tex,
          printed: \number\year-\number\month-\number\day,
          \thehours.\ifnum\theminutes<10{0}\fi\theminutes}
} \makeatletter\def\thefootnote{\@arabic\c@footnote}\makeatother


\maketitle

{\small \sc Keywords.}{ Quasihyperbolic metric, distance-ratio metric, spherical metric, M\"obius transformation, quasiinvariance}

{\small \sc 2010 Mathematics Subject Classification.}{ 30C65, 51M10}

\medskip

\section{Introduction}

A fundamental principle of the theory of quasiconformal mappings in ${\mathbb R}^n, n \ge 2,$ states that when the
maximal dilatation $K \to 1\,, $ $K$-quasiconformal mappings
approach conformal maps. The deep stability theory of Yu.\,G. Reshetnyak \cite{r} deals with this topic. On the other hand,
there are some, but very few, results which give explicit sharp or explicit asymptotically sharp estimates for the various bounds when $K \to 1\,.$ Before we proceed to formulate our main results,
we make some introductory remarks on the stability theory and on the history of explicit quantitative bounds, respectively.

The key result of the {\it stability theory} \cite{r} is a very general form of the classical theorem of Liouville to the effect that for $n\ge 3,$ a $1$-quasiconformal map of a domain
$D \subset {\mathbb R}^n$ onto another domain $D' \subset {\mathbb R}^n$ is a restriction of a M\"obius transformation to $D\,.$ By definition a M\"obius transformation is a member in the group generated by reflections in hyperplanes and inversions in spheres.
This result also underlines the fact that for $n \ge 3$ the cases
for $K=1$ and $K>1\,$ are drastically different.
A second ingredient of the stability theory for $n \ge 3$ deals with the case $K>1\,$ and seeks to estimate, for a fixed $K$-quasiconformal map, its distance to the "nearest" M\"obius transformation (for $n=2$ the distance to the "nearest" conformal map should be measured). However, as far
as we can see, the present stability theory does not provide explicit asymptotically sharp inequalities
when $K\to 1\,.$

The results of the present paper rely on two explicit and asymptotically sharp theorems.
The first one is an explicit version of the Schwarz lemma for $K$-quasiconformal maps of the unit ball in ${\mathbb R}^n$ and the second one an explicit estimate for the function of quasisymmetry of  $K$-quasiconformal maps of ${\mathbb R}^n, n\ge 3\,.$
For the history of the Schwarz lemma and for its preliminary form,
which fails to give an explicit asymptotically sharp bound, we refer the reader  to O. Martio, S. Rickman, and J. V\"ais\"al\"a \cite{mrv}.  The explicit form of the Schwarz lemma that we will apply here was first proved by G.\,D. Anderson, M.\,K. Vamanamurthy, and
M. Vuorinen \cite[(4.11)]{avv0}, see also \cite[11.50]{vu1}.
Another key result is an explicit bound for the function of quasisymmetry due to M. Vuorinen \cite{vu1b}. These two explicit results have had numerous applications. One of
these, a result of Seittenranta \cite{sei}, will be applied below.
What is common for these the explicit sharp bounds, is the role played
by special functions such as capacity of the Teichm\"uller ring.

Since its introduction  over thirty years ago,
the quasihyperbolic metric has become one of the standard tools
in geometric function theory. Recently it was observed that very
little is known about the geometry defined by hyperbolic type metrics,
and several authors are now working on this topic \cite{bm},
 \cite{himps}, \cite{k}, \cite{vu2}.

Let $D\subsetneq\R^n$ be a domain. The quasihyperbolic metric $k_D$ is defined by \cite{gp}
$$
k_{D}(x,y)=\inf_{\gamma\in\Gamma}\int_{\gamma}\frac{1}{d(z)}|dz|, {\quad} x,y\in D,
$$
where $\Gamma$ is the family of all rectifiable curves in $D$ joining $x$ and $y$, and
$d(z)=d(z,\partial D)$ is the Euclidean distance between $z$ and the boundary of $D$.
The explicit formula for the quasihyperbolic metric is known only in very few domains.
One such domain is the punctured space $\R^n\setminus\{0\}$ \cite{mo}.
The distance-ratio metric is defined as
\beq\label{jmetric}
j_D(x,y)=\log\left(1+\frac{|x-y|}{\min\{d(x),d(y)\}}\right),\quad x,y\in D.
\eeq
It is well known that \cite[Lemma 2.1]{gp}, \cite[(3.4)]{vu1}
$$
 j_D(x,y)\leq k_D(x,y)
$$
for all domains $D\subsetneq\Rn$ and $x,y\in D$.
Unlike the hyperbolic metric of the unit ball, neither
the quasihyperbolic metric $k_D$ nor the distance-ratio metric $j_D$
are invariant under M\"obius transformations.
F.\,W. Gehring, B.\,P. Palka and B.\,G. Osgood proved that these metrics are not
changed by more than a factor $2$ under M\"obius transformations, see \cite[Corollary 2.5]{gp} and \cite[proof of Theorem 4]{go}.

\begin{theorem} \label{gplem}
If $D$ and $D'$ are proper subdomains of $\R^n$ and if $f$ is a
M\"obius transformation of $D$ onto $D'$, then for all $x,y\in D$
$$\frac12 j_{D}(x,y)\leq j_{D'}(f(x),f(y))\leq 2j_D(x,y)$$
and
$$\frac12 k_{D}(x,y)\leq k_{D'}(f(x),f(y))\leq 2k_D(x,y).$$
\end{theorem}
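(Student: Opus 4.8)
The plan is to prove the two chains of inequalities in parallel, observing that each reduces to controlling how a M\"obius transformation distorts the distance function $d(z,\partial D)$. The key geometric lemma I would isolate first is the following: if $f$ is a M\"obius transformation, then for all $x,z\in D$ one has
\[
 \frac12\,\frac{|f'(z)|}{|f'(x)|}\,\frac{d(z)}{d(x)} \le \frac{d(f(z))}{d(f(x))} \le 2\,\frac{|f'(z)|}{|f'(x)|}\,\frac{d(z)}{d(x)},
\]
where $|f'|$ denotes the linear dilatation (the absolute value of the conformal scaling factor at the point). This is essentially the statement that $d(f(z))$ and $|f'(z)|\,d(z)$ agree up to a factor $2$; it follows from the fact that a M\"obius transformation maps the ball $B(z,d(z))\subset D$ onto a ball (or half-space) contained in $D'$ whose radius is comparable to $|f'(z)|\,d(z)$ with constant $2$, using the standard estimate that on a ball $B(z,r)$ the dilatation $|f'|$ varies by at most a factor depending on how close one gets to the boundary --- more precisely, the sharp constant comes from the distortion estimate for $|f'|$ on $B(z,d(z)/2)$ or a direct computation with inversions. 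I would cite \cite[Corollary 2.5]{gp} or \cite{go} for the precise form, or reprove it by reducing via conjugation to a single inversion $x\mapsto x/|x|^2$.

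Granting this lemma, the $k$-inequality is immediate: for any rectifiable curve $\gamma$ from $x$ to $y$ in $D$, the image $f\circ\gamma$ is a curve from $f(x)$ to $f(y)$ in $D'$, and
\[
 \int_{f\circ\gamma}\frac{|dw|}{d(w,\partial D')} = \int_\gamma \frac{|f'(z)|\,|dz|}{d(f(z),\partial D')} \le \int_\gamma \frac{|f'(z)|\,|dz|}{\tfrac12 |f'(z)|\,d(z,\partial D)} = 2\int_\gamma \frac{|dz|}{d(z,\partial D)},
\]
where I have used the change-of-variables $|dw| = |f'(z)|\,|dz|$ and the lower bound $d(f(z))\ge \tfrac12|f'(z)|\,d(z)$ (here the ratio form of the lemma is applied with the $|f'(x)|,d(x)$ factors cancelling, or one uses the pointwise version $d(f(z))\asymp |f'(z)|d(z)$ directly). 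Taking the infimum over $\gamma$ gives $k_{D'}(f(x),f(y))\le 2k_D(x,y)$; applying this to $f^{-1}$ (also M\"obius) gives the lower bound, since $f^{-1}$ maps $D'$ onto $D$.

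For the $j$-inequality I would argue more directly from the definition \eqref{jmetric}. Write $m = \min\{d(x),d(y)\}$ and $m' = \min\{d(f(x)),d(f(y))\}$. The quantity $|f(x)-f(y)|$ is comparable to $\sqrt{|f'(x)||f'(y)|}\,|x-y|$ up to a bounded factor (the standard two-point distortion identity for M\"obius maps; for a pure inversion it is exact), and by the geometric lemma $d(f(x))\asymp |f'(x)|d(x)$, $d(f(y))\asymp |f'(y)|d(y)$. Combining these, the argument of the logarithm transforms as $1 + |f(x)-f(y)|/m'$ versus $1 + |x-y|/m$, and after tracking the constants one checks that the ratio of $\log$-arguments stays between factors that, after the elementary inequality $\log(1+Ct)\le C'\log(1+t)$ type manipulation (using that $2j_D\ge j_D$ absorbs additive constants since $\log(1+t)^2\ge\log(1+t)$... more carefully, $\log(1+2t)\le 2\log(1+t)$ for $t\ge 0$, and $1+|f(x)-f(y)|/m'\le (1+|x-y|/m)^2$), yield the factor $2$. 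Again the lower bound follows by applying the upper bound to $f^{-1}$.

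The main obstacle I anticipate is pinning down the geometric lemma with the sharp constant $2$: the naive distortion bound for $|f'|$ on the ball $B(z,d(z))$ blows up near the boundary, so one must instead use that $f$ maps $B(z,d(z))$ onto a ball or half-space in $D'$ and then relate the \emph{radius} of that image ball to $|f'(z)|\,d(z)$ --- this is where the constant $2$ enters, and it is the same mechanism as in Gehring--Palka--Osgood. Once that lemma is in hand with constant $2$, the curve-integral estimate for $k$ is routine, and the $j$ estimate is a matter of carefully bookkeeping the elementary logarithm inequalities so that the accumulated factor is exactly $2$ and not something larger.
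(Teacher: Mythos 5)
The paper itself does not prove this statement; it quotes it from \cite{gp} and \cite{go}. Your argument for the $k$-inequalities is the standard one and is essentially sound: everything reduces to the pointwise lemma $\tfrac12|f'(z)|\,d(z)\le d(f(z))\le 2|f'(z)|\,d(z)$, after which the curve integral and the $f^{-1}$ trick are routine. (Two small caveats: the ``ratio form'' you display only follows from the pointwise form with constant $4$, so you should state and use the pointwise form; and in proving it, note that $f(z)$ is generally \emph{not} the center of the image ball $f(B(z,d(z)))$, so the relevant comparison is not with the radius of that ball but with $d(f(z),\partial f(B))$ --- the factor $2$ comes from the fact that the hyperbolic density of a ball $B'$ satisfies $1\le\lambda_{B'}(w)\,d(w,\partial B')\le 2$ at every point $w$, combined with the invariance $\lambda_{f(B)}(f(z))|f'(z)|=\lambda_{B}(z)=2/d(z)$.)

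The $j$-part has a genuine gap. The two-point identity $|f(x)-f(y)|^2=|f'(x)||f'(y)|\,|x-y|^2$ (which is in fact exact for every M\"obius map, not only for inversions) combined with $d(f(z))\asymp|f'(z)|d(z)$ does not give the constant $2$: if, say, $m'=d(f(x))$, these ingredients give $|f(x)-f(y)|/m'\le 2\bigl(|x-y|/d(x)\bigr)\sqrt{|f'(y)|/|f'(x)|}$, and the derivative ratio is unbounded; if instead you symmetrize via $u=|x-y|^2/(d(x)d(y))$ to cancel it, the best you can extract is $1+r_{D'}\le\bigl(r_D+\sqrt{1+r_D^2}\,\bigr)^2$ with $r_D=|x-y|/m$, i.e.\ $j_{D'}\le 2\arsh(r_D)$, which strictly exceeds $2\log(1+r_D)=2j_D$ for all $r_D>0$ (at $r_D=1$ the implied constant is about $2.54$). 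The inequality you need, $1+|f(x)-f(y)|/m'\le(1+|x-y|/m)^2$, requires a different idea: assuming $d(f(x))\le d(f(y))$, take $a'\in\partial D'$ nearest to $f(x)$, set $a=f^{-1}(a')\in\partial D$ and $b=f^{-1}(\infty)$, which lies outside $D$ because $\infty\notin D'$. M\"obius invariance of the absolute cross ratio (equivalently, your two-point identity applied to the quadruple $x,y,a,b$) gives
\[
\frac{|f(x)-f(y)|}{d(f(x))}=\frac{|x-y|\,|a-b|}{|x-a|\,|y-b|},
\]
and then $|a-b|\le|a-x|+|x-y|+|y-b|$ together with $|x-a|\ge d(x)$ and $|y-b|\ge d(y)$ yields
\[
1+\frac{|f(x)-f(y)|}{d(f(x))}\le\Bigl(1+\frac{|x-y|}{d(x)}\Bigr)\Bigl(1+\frac{|x-y|}{d(y)}\Bigr)\le\Bigl(1+\frac{|x-y|}{m}\Bigr)^2
\]
(the case $f(\infty)=\infty$ being trivial, since then $f$ is a similarity and $j$ is exactly invariant). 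This is the argument behind \cite{go}; it is also packaged in Seittenranta's M\"obius-invariant metric $\delta_D$ of \cite{sei}, which satisfies $j_D\le\delta_D\le 2j_D$.
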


A homeomorphism  $f :D \to D'$ is said to be an $L$-bilipschitz map if
$|x-y|/L \le |f(x)-f(y)|\le L |x-y|$ for all $x,y\in D\,.$
It is easy to see that (cf. \cite[Exercise 3.17]{vu1}):

\begin{lemma} \label{biliplem}
If $D$ and $D'$ are proper subdomains of $\R^n$ and if $f :D \to D'$ is an $L$-bilipschitz map,
then
$$ j_{D'}(f(x),f(y)) \le L^2 j_D(x,y)$$
and
$$ k_{D'}(f(x),f(y)) \le L^2 k_D(x,y)$$
for all $x,y\in D\,.$
\end{lemma}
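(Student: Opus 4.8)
The plan is to deduce both estimates from a single geometric observation: an $L$-bilipschitz homeomorphism distorts distances to the boundary by at most the factor $L$, that is,
$$d(f(z),\partial D')\ \ge\ \frac{1}{L}\, d(z,\partial D)\qquad\text{for every } z\in D .$$
I would prove this first. Fix $z\in D$ and set $r=d(z,\partial D)$, so that $B(z,r)\subset D$. Since $f^{-1}\colon D'\to D$ is also $L$-bilipschitz, every $w\in D'$ with $|w-f(z)|<r/L$ satisfies $|f^{-1}(w)-z|\le L|w-f(z)|<r$, hence $f^{-1}(w)\in B(z,r)$; thus $B(f(z),r/L)\cap D'\subset f(B(z,r))\subset D'$. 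A short compactness argument — using that $f$ is a homeomorphism, that $\partial D'$ is closed and nonempty, and that $\overline{B(z,\rho)}\subset D$ for every $\rho<r$ — upgrades this to $B(f(z),r/L)\subset D'$, which is exactly the claimed inequality. Alternatively one may invoke \cite[Exercise 3.17]{vu1}.

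Granting this, the $j$-part is a direct substitution. Writing $d=d(\cdot,\partial D)$ and $d'=d(\cdot,\partial D')$, the bilipschitz bound $|f(x)-f(y)|\le L|x-y|$ together with the boundary estimate gives
$$j_{D'}(f(x),f(y))\le\log\!\left(1+\frac{L^2\,|x-y|}{\min\{d(x),d(y)\}}\right).$$
It then remains to use the elementary inequality $\log(1+L^2t)\le L^2\log(1+t)$, valid for $t\ge 0$ and $L\ge 1$: both sides vanish at $t=0$, and the derivatives satisfy $L^2/(1+L^2t)\le L^2/(1+t)$ because $1+L^2t\ge 1+t$. Applying it with $t=|x-y|/\min\{d(x),d(y)\}$ yields $j_{D'}(f(x),f(y))\le L^2 j_D(x,y)$.

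For the quasihyperbolic metric I would argue with curves. Let $\gamma$ be any rectifiable curve in $D$ joining $x$ and $y$, parametrized by arclength, $\gamma\colon[0,\ell]\to D$ with $|\gamma'|=1$ a.e. Since $f$ is $L$-Lipschitz, $f\circ\gamma$ is an $L$-Lipschitz, hence absolutely continuous, curve in $D'$ joining $f(x)$ and $f(y)$ with $|(f\circ\gamma)'|\le L$ a.e.; in particular it is admissible for $k_{D'}(f(x),f(y))$. Using the boundary estimate pointwise along $\gamma$,
$$k_{D'}(f(x),f(y))\le\int_0^\ell\frac{|(f\circ\gamma)'(t)|}{d'(f(\gamma(t)))}\,dt\le\int_0^\ell\frac{L^2}{d(\gamma(t))}\,dt=L^2\int_\gamma\frac{|dz|}{d(z)}.$$
Taking the infimum over all such $\gamma$ gives $k_{D'}(f(x),f(y))\le L^2 k_D(x,y)$.

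The computations themselves are routine; the only points requiring genuine care are the compactness step in the boundary-distance inequality and the verification that $f\circ\gamma$ is an admissible competitor (its rectifiability and the validity of the line-integral formula for an absolutely continuous parametrization). That is where I would expect to spend the effort.
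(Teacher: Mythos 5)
Your proposal is correct and follows essentially the same route the paper intends: the key step is the boundary-distance estimate $d(f(z),\partial D')\ge d(z,\partial D)/L$, which the paper itself establishes (by a similar compactness argument) in its discussion of locally bilipschitz maps at the start of Section 2, after which the $j$-bound follows from $\log(1+L^2t)\le L^2\log(1+t)$ and the $k$-bound from integrating over curves. No gaps; the elementary logarithm inequality and the admissibility of $f\circ\gamma$ are both verified adequately.
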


The problem of estimating the quasihyperbolic metric and comparing it with other metrics was suggested in \cite{vu2}. In this paper we will give an improved version of quasiinvariance of quasihyperbolic metric under M\"obius self-mappings of the unit ball $\Bn$ and some other results motivated by \cite{vu2}.

\begin{theorem} \label{th1}
 Let $a\in\Bn$ and $h : \Bn \to \Bn$ be a M\"obius transformation of the unit ball onto itself with $h(a) =0\,.$
Then for all $x,y\in\Bn$
$$\frac{1}{1+|a|}k_{\Bn}(x,y)\leq k_{\Bn}\left(h(x),h(y)\right)\leq(1+|a|)k_{\Bn}(x,y),$$
and the constants $1+|a|$ and $1/(1+|a|)$ are both sharp.
\end{theorem}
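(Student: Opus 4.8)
The plan is to work directly with the quasihyperbolic length element and the conformality of $h$. Since $h$ is a M\"obius self-map of $\Bn$ it is conformal and preserves the hyperbolic metric $\rho_{\Bn}$ of $\Bn$, so its derivative satisfies
\[
|h'(z)| = \frac{1-|h(z)|^2}{1-|z|^2}.
\]
Substituting this into the change of variables for the density $1/(1-|z|)$ defining $k_{\Bn}$, the quasihyperbolic length of $h\circ\gamma$ equals $\int_\gamma \frac{|h'(z)|}{1-|h(z)|}\,|dz| = \int_\gamma \frac{1+|h(z)|}{1+|z|}\cdot\frac{|dz|}{1-|z|}$ for every rectifiable curve $\gamma$ in $\Bn$. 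Thus the theorem reduces to the two-sided pointwise estimate $\frac1{1+|a|}\le\frac{1+|h(z)|}{1+|z|}\le 1+|a|$ for the ratio of the two densities.

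For the upper bound I would estimate $|h(z)|$ using only that $h$ is a hyperbolic isometry with $h(a)=0$: since $\rho_{\Bn}(0,w)=2\arctanh|w|$,
\[
2\arctanh|h(z)| = \rho_{\Bn}(h(a),h(z)) = \rho_{\Bn}(a,z) \le \rho_{\Bn}(a,0)+\rho_{\Bn}(0,z) = 2\arctanh|a|+2\arctanh|z|,
\]
and the addition formula for $\tanh$ turns this into $|h(z)|\le(|a|+|z|)/(1+|a||z|)$. Hence $1+|h(z)|\le (1+|a|)(1+|z|)/(1+|a||z|)$, so $\frac{1+|h(z)|}{1+|z|}\le\frac{1+|a|}{1+|a||z|}\le 1+|a|$; integrating over a curve joining $x$ to $y$ and taking the infimum gives $k_{\Bn}(h(x),h(y))\le (1+|a|)k_{\Bn}(x,y)$. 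The lower bound is the same estimate applied to the M\"obius self-map $h^{-1}$, which carries $h(0)$ to $0$; the relation $2\arctanh|h(0)|=\rho_{\Bn}(h(a),h(0))=\rho_{\Bn}(a,0)$ forces $|h(0)|=|a|$, so $k_{\Bn}(x,y)\le(1+|a|)k_{\Bn}(h(x),h(y))$.

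For sharpness, write $h=R\circ\sigma_a$ with $R$ orthogonal and $\sigma_a$ the canonical involution of $\Bn$ fixing $a$; since $R$ is a $k_{\Bn}$-isometry it suffices to treat $h=\sigma_a$, and conjugating by a rotation we may take $a=|a|e_1$. For $0<s<1$ set $x=0$ and $y=-se_1$. Then $x$, $y$, $\sigma_a(x)$, $\sigma_a(y)$ all lie on the diameter through $0$ and $a$, and along segments contained in a diameter the quasihyperbolic length integral equals $j_{\Bn}$ of the endpoints, which together with $j_{\Bn}\le k_{\Bn}$ makes such segments $k_{\Bn}$-geodesics. A direct computation then gives $k_{\Bn}(0,y)=\log\frac1{1-s}$ and $k_{\Bn}(\sigma_a(0),\sigma_a(y))=\log\frac{1+|a|s}{1-s}$, so
\[
\frac{k_{\Bn}(h(x),h(y))}{k_{\Bn}(x,y)} = 1+\frac{\log(1+|a|s)}{\log\frac1{1-s}} \xrightarrow[s\to0^+]{} 1+|a|.
\]
Running this same family through $h^{-1}$, as in the lower-bound step, shows that $1/(1+|a|)$ cannot be improved either. (If $a=0$ then $h$ is orthogonal and both constants equal $1$, so there is nothing to prove.)

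The only step carrying weight is the identity $\frac{|h'(z)|}{1-|h(z)|}(1-|z|)=\frac{1+|h(z)|}{1+|z|}$ and the decision to bound $|h(z)|$ via the triangle inequality for $\rho_{\Bn}$ rather than through an explicit coordinate formula for $\sigma_a$ in $\Bn$, which keeps the argument dimension-free. I do not anticipate a real obstacle; the point needing a little care is the sharpness part, where one must take the test points shrinking to the center along a diameter and check that the relevant segments are quasihyperbolic geodesics.
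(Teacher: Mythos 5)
Your argument is correct and follows the same overall strategy as the paper's proof: both use the hyperbolic isometry identity $|h'(z)|=(1-|h(z)|^2)/(1-|z|^2)$ to rewrite the quasihyperbolic density ratio as $(1+|h(z)|)/(1+|z|)$, bound this ratio above by $1+|a|$, integrate along curves, and obtain the reverse inequality by applying the result to $h^{-1}$, whose base point has modulus $|a|$. Where you genuinely differ is in how the ratio is bounded: the paper derives $|T_a(x)|\geq \bigl||x|-|a|\bigr|/(1-|a||x|)$ from the explicit coordinate formula for $T_a$ and then splits into the cases $|x|\geq|a|$ and $|x|<|a|$, whereas you get $|h(z)|\leq(|a|+|z|)/(1+|a||z|)$ from the triangle inequality for $\rho_{\Bn}$ together with $\rho_{\Bn}(0,w)=2\arctanh|w|$; your route is coordinate-free and shorter. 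Your sharpness family ($x=0$, $y=-se_1$) also differs from the paper's ($x=a$, $y=(1+t)a$) but works equally well, and your verification that $|h(0)|=|a|$ correctly transfers sharpness to the lower constant. Two small inaccuracies should be repaired. First, the claim that the quasihyperbolic length of any segment contained in a diameter equals $j_{\Bn}$ of its endpoints is false when the segment contains the origin in its interior: for $x=-te_1$, $y=te_1$ the length is $2\log\frac{1}{1-t}$ while $j_{\Bn}(x,y)=\log\frac{1+t}{1-t}$. It is true for segments lying on a single radius, which is all you actually use (both $[0,-se_1]$ and its image lie on radii), so the argument stands once the claim is restricted to radial segments. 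Second, the involution $\sigma_a$ should be described as swapping $a$ and $0$, not as fixing $a$; as written, $R\circ\sigma_a$ would not send $a$ to $0$.
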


\begin{remark}{\rm
It is a basic fact, cf. \cite{b}, \cite[1.39]{vu1}, that the map $h$ in Theorem \ref{th1} is an $L$-bilipschitz with
$L=(1+|a|)/(1-|a|)\,.$  Therefore Lemma \ref{biliplem} gives a version of Theorem \ref{th1} with the constant $L^2$ in place of the sharp constant. But it is obvious that the constant $L^2$ tends to infinity as $|a|$ tends to 1.
Theorem \ref{th1} also shows that the constant $2$ in Theorem \ref{gplem} cannot be replaced
by a smaller constant when $D= D'= \Bn\,.$

We also conjecture that a conclusion similar to Theorem \ref{th1} holds for
the distance-ratio metric, but have
been unable to prove it. See Conjecture \ref{jconjecture} below.
}
\end{remark}

We believe that the next proposition for the planar case is well-known but have been unable to find it in the literature.

\begin{proposition}\label{qh4conformal} Let $D\subsetneq\C$ be a domain and $f$ maps $D$ conformally onto $D'=f(D)$ then
$$
 \dfrac14k_D(x,y)\leq k_{D'}(f(x),f(y))\leq 4 k_D(x,y)
$$
for all $x,y\in D$, and the constants are both sharp.
\end{proposition}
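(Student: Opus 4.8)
The plan is to exploit the distortion theory for conformal maps in the plane, which gives two-sided control on the derivative in terms of the boundary distance. The crucial classical fact is the following consequence of the Koebe $1/4$-theorem: if $f$ maps $D\subsetneq\C$ conformally onto $D'$, then for every $z\in D$
$$
\frac14\,\frac{d(f(z),\partial D')}{d(z,\partial D)}\le |f'(z)|\le 4\,\frac{d(f(z),\partial D')}{d(z,\partial D)}.
$$
Indeed, applying Koebe distortion to the restriction of $f$ to the largest disk $B(z,d(z,\partial D))\subset D$ shows that $f$ covers the disk of radius $\tfrac14 |f'(z)|\,d(z,\partial D)$ about $f(z)$, giving the lower bound for $|f'(z)|$; applying the same argument to $f^{-1}$ at the point $f(z)$ gives the upper bound. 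This is the standard ``quasi-isometry in the quasihyperbolic metric'' estimate, and it is presumably the reason the authors expect it to be well known.

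Granting this, the argument is a direct change of variables in the integral defining $k_D$. First I would take any rectifiable curve $\gamma$ in $D$ joining $x$ and $y$; then $f\circ\gamma$ joins $f(x)$ and $f(y)$ in $D'$, and
$$
\int_{f\circ\gamma}\frac{|dw|}{d(w,\partial D')}
=\int_{\gamma}\frac{|f'(z)|\,|dz|}{d(f(z),\partial D')}
\le 4\int_{\gamma}\frac{|dz|}{d(z,\partial D)},
$$
using the displayed upper bound for $|f'(z)|$. Taking the infimum over $\gamma$ gives $k_{D'}(f(x),f(y))\le 4\,k_D(x,y)$. Applying the same reasoning to the conformal map $f^{-1}:D'\to D$ yields $k_D(x,y)\le 4\,k_{D'}(f(x),f(y))$, which is the remaining inequality $\tfrac14 k_D(x,y)\le k_{D'}(f(x),f(y))$. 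I should note that a curve $\gamma$ in $D$ maps to a rectifiable curve $f\circ\gamma$ in $D'$ precisely because $f$ is locally bilipschitz on compact subsets, and conversely, so the infima genuinely correspond; this is a routine point but worth a sentence.

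For sharpness, the natural candidate is a punctured-disk type example where the Koebe constant $1/4$ is attained in the limit. I would take $D=\D\setminus\{0\}$ (or a slit domain) and a conformal map such as $z\mapsto z^{?}$ or a Koebe function composed with a suitable chart, chosen so that the distortion ratio $|f'(z)|\,d(z,\partial D)/d(f(z),\partial D')$ approaches $4$ along a sequence of points, and then let the two test points $x,y$ shrink toward that sequence so that the local distortion dominates the quasihyperbolic distance. Concretely, one can use the fact — already cited in the excerpt via \cite{mo} — that $k_{\C\setminus\{0\}}$ has an explicit formula, and map a neighborhood of the puncture by an explicit conformal map whose derivative exhibits the extremal Koebe behaviour; comparing the explicit $k$ values on short segments near the puncture gives ratios tending to $4$ and $1/4$. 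The main obstacle in writing this up cleanly will be the sharpness construction: one must verify that the \emph{global} quasihyperbolic distance between the chosen nearby points — not merely the pointwise derivative ratio — realizes the constant $4$ in the limit, which requires either an explicit $k_D$ formula on both sides or a careful estimate showing the geodesic stays in the region where the distortion is near-extremal. The upper-bound inequalities themselves are essentially immediate once the Koebe-type estimate is in hand.
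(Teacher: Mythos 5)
Your derivation of the two-sided inequality is exactly the paper's argument: the Koebe one-quarter theorem gives the pointwise estimate
\[
\frac{1}{4\,d(z,\partial D)}\le\frac{|f'(z)|}{d(f(z),\partial D')}\le\frac{4}{d(z,\partial D)},
\]
(this is the paper's Lemma \ref{qhdensesti}, proved the same way, by normalizing $f$ on the largest disk about $z$ and then applying the same bound to $f^{-1}$), and the factor $4$ then follows by integrating along a curve and taking infima, with the lower bound obtained by symmetry from $f^{-1}$. That part of your proposal is complete and correct.

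The gap is the sharpness claim, which you only outline. You correctly identify the Koebe function and the explicit formula for $k_{\C\setminus\{\text{point}\}}$ as the right ingredients, and you correctly flag the real difficulty — that one must show the \emph{global} quasihyperbolic distance between the test points, not merely the derivative ratio at a point, realizes the constant $4$ in the limit — but you do not resolve it, so the proposition's assertion that the constants are sharp remains unproved in your write-up. The paper's resolution is worth knowing: take $f(z)=z/(1-z)^2$, $G=f(\D)=\C\setminus(-\infty,-1/4]$, and the symmetric pair $z=te^{i\theta}$, $\bar z$ with $\theta$ fixed small. For such points $k_G(f(z),f(\bar z))$ coincides with $k_{\C\setminus\{-1/4\}}(f(z),\overline{f(z)})=2\arctan\bigl(\Imag f(z)/(\Real f(z)+1/4)\bigr)$ by the Martin--Osgood formula, while $k_\D(z,\bar z)$ is squeezed between $\tfrac12\rho_\D(z,\bar z)$ and $\tfrac{1+|z|}{2}\rho_\D(z,\bar z)$ (the paper's Lemma \ref{kjlemma}), so that as $t\to 0$ both quasihyperbolic distances are computed exactly to first order and the ratio tends to $4$. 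If you adopt this choice of points, your sketch becomes a complete proof; as it stands, the sharpness step is a plan rather than an argument.
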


Gehring and Osgood proved the following quasiinvariance property of the quasihyperbolic metric under quasiconformal mappings. For basic results on quasiconformal map and the definition of $K$-quasiconformality we follow V\"ais\"al\"a \cite{v}. Note that for $n=2$, $K=1$, Theorem \ref{gothm} does not give the same constants as Proposition \ref{qh4conformal}.

\begin{theorem}\cite[Theorem 3]{go} \label{gothm} There exists a constant $c$ depending only on $n$ and $K$ with the following property.  If $f$ is a $K$-quasiconformal mapping of $D$ onto $D'$, then
$$
  k_{D'}(f(x),f(y))\leq c \max\{k_D(x,y),k_D(x,y)^{\alpha}\},\,\, \alpha=K^{1/(1-n)},
$$
for all $x,y\in D$.
\end{theorem}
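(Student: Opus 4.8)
I would follow the approach of Gehring and Osgood: exploit that $k_D$ is a length metric to reduce the global estimate to a small-scale one, and obtain the small-scale estimate from the H\"older continuity of quasiconformal maps. Throughout write $d(x)=d(x,\partial D)$ and $\alpha=K^{1/(1-n)}\in(0,1]$. The core is a \emph{local distortion estimate}: there exist constants $r_0\in(0,1)$ and $c_1$, depending only on $n$ and $K$, such that
\[
k_{D'}(f(x),f(y))\le c_1 k_D(x,y)^{\alpha}\qquad\text{whenever}\ k_D(x,y)\le r_0 .
\]
To prove it, note that $j_D\le k_D$ forces, for $r_0$ small enough, $|x-y|\le\tfrac14 d(x)$; then $[x,y]\subset D$, $d(y)\asymp d(x)$, and an elementary computation gives $k_D(x,y)\asymp|x-y|/d(x)$ with absolute constants. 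Composing $f|_{B(x,d(x)/2)}$ with suitable similarities turns it into a $K$-quasiconformal embedding of $\Bn$ fixing the origin, so the explicit, asymptotically sharp Schwarz lemma and quasisymmetry bounds for such maps (\cite[(4.11)]{avv0}, \cite[11.50]{vu1}, \cite{vu1b}) give, for every $w$ with $|w-x|=\tfrac14 d(x)$,
\[
|f(x)-f(y)|\le C(n,K)\Bigl(\tfrac{|x-y|}{d(x)}\Bigr)^{\alpha}|f(x)-f(w)| .
\]
Since $f\bigl(\partial B(x,\tfrac14 d(x))\bigr)$ is a topological sphere that separates $f(x)$ from $\partial D'$, the infimum of $|f(x)-f(w)|$ over such $w$ equals $d\bigl(f(x),f(\partial B(x,\tfrac14 d(x)))\bigr)$, which is $\le d(f(x),\partial D')$. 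Hence $|f(x)-f(y)|\le C(n,K)(|x-y|/d(x))^{\alpha}\,d(f(x),\partial D')$, and after shrinking $r_0$ (still depending only on $n,K$) the right-hand side is $\le\tfrac12 d(f(x),\partial D')$, so $[f(x),f(y)]\subset D'$ and $k_{D'}(f(x),f(y))\le 2|f(x)-f(y)|/d(f(x),\partial D')$. Combining this with $k_D(x,y)\asymp|x-y|/d(x)$ proves the local estimate.

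\textbf{Chaining.} For arbitrary $x,y\in D$ choose a curve $\gamma$ from $x$ to $y$ with $k_D$-length $\ell\le 2k_D(x,y)$ and cut it into $N=\lceil\ell/r_0\rceil$ consecutive subarcs of $k_D$-length $\le r_0$, with successive endpoints $x=x_0,\dots,x_N=y$; then $\sum_i k_D(x_{i-1},x_i)\le\ell$. By the triangle inequality and the local estimate,
\[
k_{D'}(f(x),f(y))\le\sum_{i=1}^N k_{D'}(f(x_{i-1}),f(x_i))\le c_1\sum_{i=1}^N k_D(x_{i-1},x_i)^{\alpha} .
\]
If $k_D(x,y)\le r_0/2$ then $N=1$ and the bound is $c_1 k_D(x,y)^{\alpha}$; otherwise $N<4k_D(x,y)/r_0$ while each term is $\le r_0^{\alpha}$, so the sum is $\le 4r_0^{\alpha-1}k_D(x,y)$. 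Since $r_0<1$ and $\alpha\le1$ give $r_0^{\alpha-1}\ge1$, both cases yield $k_{D'}(f(x),f(y))\le c\max\{k_D(x,y),k_D(x,y)^{\alpha}\}$ with $c=4c_1 r_0^{\alpha-1}$ depending only on $n$ and $K$, which is the assertion.

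\textbf{Main obstacle.} The crux is the local distortion estimate, and specifically producing the H\"older-type inequality with a multiplicative constant depending on $n$ and $K$ \emph{only} --- this is exactly where the explicit asymptotically sharp forms of the Schwarz lemma and of the quasisymmetry function are indispensable; the merely qualitative H\"older continuity of quasiconformal maps would not suffice. One must also take care at the interface between the Euclidean and quasihyperbolic estimates on the two domains: the separation argument that bounds $d(f(x),\partial D')$ below by $d\bigl(f(x),f(\partial B(x,\tfrac14 d(x)))\bigr)$, and the choice of $r_0$ guaranteeing that every segment used lies inside the relevant domain. The comparison $k_D(x,y)\asymp|x-y|/d(x)$ at small scales and the chaining step are routine.
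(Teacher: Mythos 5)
This theorem is stated in the paper only as a quoted result, with a citation to Gehring--Osgood \cite[Theorem 3]{go}; the paper itself contains no proof to compare against. Your reconstruction is essentially the original Gehring--Osgood argument (a local H\"older-type distortion estimate obtained by rescaling $f$ on the ball $B(x,d(x))$ and using the separation of $\partial D'$ by the image of a small sphere, followed by chaining along a near-geodesic), and the details you give --- the comparison $k_D(x,y)\asymp|x-y|/d(x)$ at small quasihyperbolic scales, the lower bound $d(f(x),\partial D')\ge\inf_w|f(x)-f(w)|$, and the two-case analysis in the chaining step --- are all sound. One small correction of emphasis: for this theorem, which only asserts the existence of some $c=c(n,K)$, the asymptotically sharp \emph{constants} in the Schwarz lemma and quasisymmetry bounds are not needed; what is essential is the sharp \emph{exponent} $\alpha=K^{1/(1-n)}$ in the H\"older estimate. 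The explicit, asymptotically sharp constants become indispensable only for the refinement in Theorem \ref{kdistortion}, where $c\to1$ as $(r,K)\to(0,1)$.
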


The sharpness statement in Theorem \ref{th1} shows that the constant $c$
in Theorem \ref{gothm}
cannot be chosen so that it converges to $1$ when $K\to1$.

We will refine this result by proving that, in a local sense, we improve the constant for quasiconformal maps of the unit ball onto itself.

\begin{theorem} \label{kdistortion}
  Let $f  \colon \Bn \to \Bn$ be a $K$-quasiconformal mapping of the unit ball onto itself, and let $r \in (0,1)$.
  There exists $c=c(n,K,r)$ such that for all $x,y \in \Bn(r)$ with $f(x),f(y) \in \Bn(r)$
  \[
    j_{\Bn}(f(x),f(y)) \le c \max \{ j_{\Bn}(x,y), j_{\Bn}(x,y)^\alpha \}
  \]
  and
  \[
    k_{\Bn}(f(x),f(y)) \le c \max \{ k_{\Bn}(x,y), k_{\Bn}(x,y)^\alpha \},
  \]
  where $\alpha = K^{1/(1-n)}$ and $c \to 1$ as $(r,K) \to (0,1)$.
\end{theorem}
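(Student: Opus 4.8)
The plan is to combine an elementary comparison of metrics on the small ball $\Bn(r)$ with the explicit, asymptotically sharp Schwarz lemma for quasiconformal self-maps of $\Bn$. Let $\rho_{\Bn}$ denote the hyperbolic metric of $\Bn$ and put $p(x,y)=\tanh\bigl(\rho_{\Bn}(x,y)/2\bigr)$; from $\sh^2\bigl(\rho_{\Bn}(x,y)/2\bigr)=|x-y|^2/\bigl((1-|x|^2)(1-|y|^2)\bigr)$ one gets the closed form $p(x,y)=|x-y|/\sqrt{|x-y|^2+(1-|x|^2)(1-|y|^2)}$. This $p$ is the natural vehicle because the Schwarz lemma of \cite[(4.11)]{avv0}, \cite[11.50]{vu1} says that, for our $K$-quasiconformal $f\colon\Bn\to\Bn$,
\[
  p\bigl(f(x),f(y)\bigr)\;\le\;\varphi_{K,n}\bigl(p(x,y)\bigr)\;\le\;\lambda_n^{\,1-\alpha}\,p(x,y)^{\alpha},\qquad \alpha=K^{1/(1-n)},
\]
with $\lambda_n$ the Gr\"otzsch-type constant ($\lambda_2=4$); here the distortion factor $\lambda_n^{\,1-\alpha}$ tends to $1$ as $K\to1$ because then $\alpha\to1$, and no normalization of $f(0)$ is needed since this estimate is invariant under M\"obius self-maps of $\Bn$.

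Next I would record the elementary comparisons on $\Bn(r)$. For $u,v\in\Bn(r)$,
\[
  \tfrac{\log(1+2r)}{2r}\,|u-v|\;\le\;j_{\Bn}(u,v)\;\le\;k_{\Bn}(u,v)\;\le\;\tfrac{1}{1-r}\,|u-v|,
\]
the first inequality by concavity of $t\mapsto\log(1+t)$ (using $|u-v|\le2r$ and $\min\{1-|u|,1-|v|\}\le1$), the second being the standard bound \cite[Lemma 2.1]{gp}, and the third obtained by integrating $1/(1-|z|)$ over the segment $[u,v]\subset\Bn(r)$ (convexity of $\Bn(r)$). Likewise, bounding the denominator in the closed form of $p$ between $1-r^2$ and $\sqrt{1+4r^2}$,
\[
  \frac{|u-v|}{\sqrt{1+4r^2}}\;\le\;p(u,v)\;\le\;\frac{|u-v|}{1-r^2},\qquad u,v\in\Bn(r).
\]
Every multiplicative constant appearing here tends to $1$ as $r\to0$.

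Now I would assemble the estimate. For $x,y\in\Bn(r)$ with $f(x),f(y)\in\Bn(r)$, applying the Schwarz lemma and the two $p$-comparisons (to $x,y$ and to $f(x),f(y)$) gives the local H\"older bound
\[
  |f(x)-f(y)|\;\le\;C(r,K)\,|x-y|^{\alpha},\qquad C(r,K)=\sqrt{1+4r^2}\;\lambda_n^{\,1-\alpha}\,(1-r^2)^{-\alpha},
\]
and $C(r,K)\to1$ as $(r,K)\to(0,1)$. Combining this with $k_{\Bn}(f(x),f(y))\le\tfrac{1}{1-r}|f(x)-f(y)|$ and with $|x-y|\le\tfrac{2r}{\log(1+2r)}\,k_{\Bn}(x,y)$ (both from the chain above),
\[
  k_{\Bn}(f(x),f(y))\;\le\;\frac{C(r,K)}{1-r}\Bigl(\tfrac{2r}{\log(1+2r)}\Bigr)^{\!\alpha}k_{\Bn}(x,y)^{\alpha}\;\le\;c\,\max\{k_{\Bn}(x,y),k_{\Bn}(x,y)^{\alpha}\},
\]
with $c=c(n,K,r)$ the displayed constant and using $t^{\alpha}\le\max\{t,t^{\alpha}\}$. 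Running the same chain with $j_{\Bn}$ replacing $k_{\Bn}$ (via $j_{\Bn}(f(x),f(y))\le k_{\Bn}(f(x),f(y))\le\tfrac{1}{1-r}|f(x)-f(y)|$ and $\tfrac{\log(1+2r)}{2r}|x-y|\le j_{\Bn}(x,y)$) gives the first inequality with the same $c$, and $c(n,K,r)\to1$ as $(r,K)\to(0,1)$ since each factor does.

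The one non-elementary input, and the heart of the matter, is the explicit Schwarz lemma: it must supply a H\"older constant that collapses to $1$ as $K\to1$. The qualitative Schwarz lemma, or the Gehring--Osgood Theorem \ref{gothm} applied off the shelf, only yields a constant bounded away from $1$, which by the sharpness in Theorem \ref{th1} genuinely cannot be improved globally. The remainder is the routine but unavoidable bookkeeping of comparing $j_{\Bn}$, $k_{\Bn}$, the Euclidean metric and $p$ on the small ball $\Bn(r)$, and of checking that the assembled constant has the claimed limit.
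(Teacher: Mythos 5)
Your argument is correct, and it takes a genuinely different route from the paper. The paper's proof is a two-line combination of Corollary \ref{seittenranta} (Seittenranta's quasiinvariance of $\delta_{\Bn}=\rho_{\Bn}$ under quasiconformal maps, with constant $b=\lambda_n^{\beta-1}\beta\eta_{K,n}(1)\to1$) with the two-sided comparison $\tfrac12\rho_{\Bn}\le m\le\tfrac{1+r}2\rho_{\Bn}$ of Lemma \ref{kjlemma}, yielding the clean constant $c=(1+r)b$. You instead run everything through the Euclidean metric on the small ball: the explicit Schwarz lemma $\varphi_{K,n}(t)\le\lambda_n^{1-\alpha}t^{\alpha}$ applied to $\tanh(\rho_{\Bn}/2)$ gives a local H\"older bound $|f(x)-f(y)|\le C(r,K)|x-y|^{\alpha}$ with $C\to1$, and elementary sandwich estimates for $j_{\Bn}$, $k_{\Bn}$ and $\tanh(\rho_{\Bn}/2)$ against $|x-y|$ on $\Bn(r)$ finish the job; each step checks out, including the concavity bound $\log(1+t)\ge t\log(1+2r)/(2r)$ and the convexity argument for $k_{\Bn}(u,v)\le|u-v|/(1-r)$. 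What your route buys is self-containedness: it needs only the Schwarz lemma \cite[(4.11)]{avv0}, not Seittenranta's theorem nor the reflection extension of $f$ to $\overline{\Rn}$, and it is notable that the authors themselves flag the Schwarz-lemma route immediately after their proof but only push it through conditionally on Conjecture \ref{phiconjecture} — you sidestep that conjecture by localizing to $\Bn(r)$ before comparing metrics. What the paper's route buys is a simpler, more transparent constant $(1+r)b$ and a derivation that never leaves the conformally natural metrics; your constant $\sqrt{1+4r^2}\,\lambda_n^{1-\alpha}(1-r)^{-1}(1-r^2)^{-\alpha}\bigl(2r/\log(1+2r)\bigr)^{\alpha}$ is messier but has the required limit, so both satisfy the theorem as stated.
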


Finally we prove in this paper several inequalities between the quasihyperbolic metric and other commonly used metrics such as the hyperbolic metric of the unit ball and the chordal metric. Along these lines, our main results are Lemma \ref{kjlemma} and Theorem \ref{kqratio}.

\section{Quasiinvariance of quasihyperbolic metric}

First we would point out that  in Lemma \ref{biliplem} the condition of $L-$bilipschitz can be replaced with local $L-$bilipschitz, that is, for all $x\in D$ there exists a neighborhood $U\subset D$ of $x$ such that $f$ is $L-$bilipschitz in $U$. Additionally, in this case we need that $f$ has a homeomorphism extension to the boundary of $D$. In fact, it is clear that $|df(x)|\leq L|dx|$. We next show that $d(f(x))\geq d(x)/L$. Let $w_0\in\partial f(D)$ with $d(f(x))=|f(x)-w_0|$ and $z_0=f^{-1}(w_0)$. Let $w\in[f(x),w_0)$ and $\gamma=f^{-1}([f(x),w])$ with $\gamma(0)=x$ and $\gamma(1)=z=f^{-1}(w)$. Since $\gamma$ is compact, we can choose finite number of balls $\{B_i\}_{i=1}^m$ in $D$ covering $\gamma$ such that $f$ is $L$-bilipschitz in every ball $B_i$. Let $\{z_j\}_{j=1}^{m+1}$ be a sequence in $\gamma$ such that $z_1=x$, $z_{m+1}=z$ and $\{z_i,z_{i+1}\}\in B_i$.
Then we have
$$d(f(x))\geq|f(x)-w|=\sum_{i=1}^{m}|f(z_i)-f(z_{i+1})|\geq\sum_{i=1}^{m}|z_i-z_{i+1}|/L\geq\dfrac1L|x-z|.$$
Letting $w$ tend to $w_0$, we get $d(f(x))\geq|x-z_0|/L\geq d(x)/L$. Now it is easy to see Lemma  \ref{biliplem} holds for locally bilipschitz mappings.

For basic facts about M\"obius transformations the reader is referred to \cite{a,b} and
\cite[Section 1]{vu1}.
We denote $x^*=x/|x|^2$ for $x\in\R^n\setminus\{0\}$, and $0^*=\infty$, $\infty^*=0$. Let
$$\sigma_a(x)=a^*+r^2(x-a^*)^*,\,\, r^2=|a|^{-2}-1,\,\, 0<|a|<1$$
be the inversion in the sphere $S^{n-1}(a^*,r)$. Then $\sigma_a(a)=0$ and $\sigma_a(a^*)=\infty$.
For $a\neq0$ let $p_a$ denote the reflection in the hyperplane $P(a,0)$ through the origin and orthogonal to
$a$, and let $T_a$ be the sense-preserving M\"obius transformation given by $T_a=p_a\circ\sigma_a$.
For $a=0$ we set $T_0=id$, the identity map. A fundamental result on M\"obius transformations of $\B^n$
is the following lemma \cite[Theorem 3.5.1]{b}:

\begin{lemma}\label{myle1}
A mapping $g$ is a M\"obius transformation of the unit ball onto itself
if and only if there exists a rotation $\kappa$ in the group  $\mathcal{O}(n)$ of all orthogonal
maps of $\Rn\,$ such that $g=\kappa\circ T_a$, where $a=g^{-1}(0)$.
\end{lemma}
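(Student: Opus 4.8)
The plan is to establish the two implications separately: the ``if'' direction is a direct verification, while the converse rests on the classification of Möbius self-maps of $\Bn$ that fix the origin.

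For the ``if'' direction I would first check that $T_a$ itself maps $\Bn$ onto $\Bn$ and sends $a$ to $0$. The sphere $S^{n-1}(a^*,r)$ in which $\sigma_a$ inverts is orthogonal to the unit sphere $S^{n-1}=\partial\Bn$: the orthogonality criterion $|0-a^*|^2=1+r^2$, upon substituting $|a^*|^2=|a|^{-2}$ and $r^2=|a|^{-2}-1$, reduces to an identity. An inversion in a sphere orthogonal to $S^{n-1}$ maps $S^{n-1}$ onto itself, and since $\sigma_a(a)=0\in\Bn$ with $a\in\Bn$, it carries the interior component $\Bn$ onto $\Bn$. The reflection $p_a$ fixes the origin and is a Euclidean isometry, so it too preserves $\Bn$; hence $T_a=p_a\circ\sigma_a$ is a Möbius self-map of $\Bn$ with $T_a(a)=p_a(0)=0$. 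Composing with any $\kappa\in\mathcal{O}(n)$, which preserves $\Bn$ and fixes $0$, shows $g=\kappa\circ T_a$ is a Möbius self-map of $\Bn$ satisfying $g(a)=0$, so indeed $a=g^{-1}(0)$.

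For the ``only if'' direction, let $g$ be any Möbius transformation of $\Bn$ onto itself and put $a=g^{-1}(0)$. Since $T_a(a)=0$ we have $T_a^{-1}(0)=a$, so $\kappa:=g\circ T_a^{-1}$ is a Möbius self-map of $\Bn$ with $\kappa(0)=g(a)=0$ (when $a=0$ one has $T_0=\mathrm{id}$ and $\kappa=g$). It therefore suffices to prove the key reduction: every Möbius self-map $\kappa$ of $\Bn$ fixing the origin lies in $\mathcal{O}(n)$; then $g=\kappa\circ T_a$ has the asserted form. To prove the reduction I would argue that $\kappa$ necessarily fixes $\infty$ as well. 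Möbius transformations preserve inversive symmetry: if two points are reflections of one another in a sphere $S$, their images are reflections in the image sphere $\kappa(S)$. The points $0$ and $\infty=0^*$ are symmetric with respect to $S^{n-1}$, and $\kappa$ maps $S^{n-1}=\partial\Bn$ onto itself, so $\kappa(\infty)$ is the symmetric point of $\kappa(0)=0$ in $S^{n-1}$, namely $\infty$. A Möbius transformation fixing $\infty$ is a similarity $x\mapsto\lambda Ux+b$ with $U\in\mathcal{O}(n)$ and $\lambda>0$; fixing $0$ forces $b=0$, and preservation of the sphere $|x|=1$ forces $\lambda=1$. Thus $\kappa=U\in\mathcal{O}(n)$, which completes the argument.

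The main obstacle is precisely this key reduction, and within it the cleanest route is the invariance of inversive symmetry under Möbius maps; once $\kappa(\infty)=\infty$ is secured, the passage to a similarity and the normalization $\lambda=1$ are routine. An alternative to invoking inversive symmetry is to use that a Möbius self-map of $\Bn$ extends to $\bar\R^n$ commuting with the reflection $x\mapsto x^*$ in $S^{n-1}$, which gives $\kappa(0^*)=\kappa(0)^*$ and hence $\kappa(\infty)=\infty$ directly.
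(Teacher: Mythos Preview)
Your argument is correct. Note, however, that the paper does not supply its own proof of this lemma: it is quoted as a known structural fact about M\"obius self-maps of $\Bn$, with a reference to Beardon \cite[Theorem 3.5.1]{b}. So there is no in-paper proof to compare against; your write-up is essentially the standard proof one finds in Beardon's book, proceeding via the orthogonality of the inversion sphere $S^{n-1}(a^*,r)$ to $\partial\Bn$ for the ``if'' direction, and via the preservation of inversive symmetry (forcing $\kappa(\infty)=\infty$ once $\kappa(0)=0$) for the ``only if'' direction.
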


The hyperbolic metric of the unit ball $\Bn$ is defined by
$$\rho_{\B^n}(x,y)=\inf_{\gamma\in\Gamma}\int_{\gamma}\frac{2|dz|}{1-|z|^2}, \, x,y\in\B^n,$$
where the infimum is taken over all rectifiable curves in $\B^n$ joining $x$ and $y$.
The formula for the hyperbolic distance in $\Bn$ is \cite[(2.18)]{vu1}
\beq\label{rho4ball}
  \sh^2 \left( \frac12\rho_\Bn(x,y) \right) =\frac{|x-y|^2}{(1-|x|^2)(1-|y|^2)},\quad x,y\in\Bn.
\eeq
It is a basic fact that $\rho_{\B^n}$
is invariant under M\"obius transformations of $\B^n$ (see \cite{b}).

\begin{proof}[Proof of Theorem \ref{th1}]
Since the quasihyperbolic metric is invariant under orthogonal maps, by Lemma \ref{myle1}
we may assume that $h=T_a$ with $a=h^{-1}(0)\,.$
By \cite[Exercise 1.41(1)]{vu1}
\begin{eqnarray*}
|T_a(x)|&=&\frac{|x-a|}{|a||x-a^*|}\\
        &=&\sqrt{\frac{|x|^2+|a|^2-2x\cdot{a}}{|a|^2|x|^2+1-2x\cdot{a}}}\\
        &\geq&\sqrt{\frac{|x|^2+|a|^2-2|x||a|}{|a|^2|x|^2+1-2|x||a|}}\\
        &=&\frac{\left||x|-|a|\right|}{1-|a||x|},
\end{eqnarray*}
where the inequality holds since $1+|x|^2|a|^2\geq|x|^2+|a|^2$.
Therefore
\begin{eqnarray*}
\frac{1+|x|}{1+|T_a(x)|}&\leq&\frac{(1+|x|)(1-|a||x|)}{1-|a||x|+||x|-|a||}\\
                        &=&\left\{\begin{array}{ll}
                                   \frac{1-|a||x|}{1-|a|},&|x|\geq|a|\\
                                   \frac{(1+|x|)(1-|a||x|)}{(1+|a|)(1-|x|)},&|x|<|a|
                                  \end{array}\right.\\
                        &\leq&1+|a|.
\end{eqnarray*}
By the property of invariance of the hyperbolic metric under M\"obius transformations, we have
$$\frac{2}{1-|x|^2}=\frac{2|T_a'(x)|}{1-|T_a(x)|^2},$$
and
\begin{eqnarray*}
\frac{1}{1-|x|}&=&\frac{1+|x|}{1+|T_a(x)|}\frac{|T_a'(x)|}{1-|T_a(x)|}\\
               &\leq&(1+|a|)\frac{|T_a'(x)|}{1-|T_a(x)|}.
\end{eqnarray*}
Let $\gamma$ be a segment of quasihyperbolic geodesic joining points $T_a(x)$ and $T_a(y)$. Then
\begin{eqnarray*}
k_{\Bn}(x,y)&\leq&\int_{T_a^{-1}(\gamma)}\frac{|dz|}{1-|z|}\\
            &\leq&(1+|a|)\int_{T_a^{-1}(\gamma)}\frac{|T_a'(z)|}{1-|T_a(z)|}|dz|\\
            &=&(1+|a|)\int_{\gamma}\frac{|dz|}{1-|z|}\\
            &=&(1+|a|)k_{\Bn}\left(T_a(x),T_a(y)\right).
\end{eqnarray*}
Since $T_a^{-1}=T_{-a}$, we have
$$k_{\Bn}\left(T_a(x),T_a(y)\right)\leq(1+|-a|)k_{\Bn}\left(T_{-a}(T_a(x)),T_{-a}(T_a(y))\right)=(1+|a|)k_{\Bn}(x,y).$$

The sharpness of constants is clear for $a=0$. For the remaining case
$0<|a|<1$, we choose $x=a$ and $y=(1+t)a\in\Bn$ with $t>0$.
Since the radii are quasihyperbolic geodesic segments of the unit ball, we have
$$k_{\Bn}(x,y)=\log\left(1+\frac{t|a|}{1-|a|-t|a|}\right)$$
and $$k_{\Bn}(T_a(x),T_a(y))=\log\frac1{1-|T_a(y)|}=\log\left(1+\frac{t|a|}{1-t|a|-(1+t)|a|^2}\right).$$
So we have
\begin{eqnarray*}
\lim_{t\to0+}\frac{k_{\Bn}(T_a(x),T_a(y))}{k_{\Bn}(x,y)}&=&\lim_{t\to0+}\frac{\log\left(1+\frac{t|a|}{1-t|a|-(1+t)|a|^2}\right)}{\log\left(1+\frac{t|a|}{1-|a|-t|a|}\right)}\\
&=&\lim_{t\to0+}\frac{1-|a|-t|a|}{1-t|a|-(1+t)|a|^2}\\
&=&\frac1{1+|a|},
\end{eqnarray*}
 and
\begin{eqnarray*}
\lim_{t\to0+}\frac{k_{\Bn}(T_a(T_{-a}(x)),T_a(T_{-a}(y)))}{k_{\Bn}(T_{-a}(x),T_{-a}(y))}
&=&\lim_{t\to0+}\frac{k_{\Bn}(x,y)}{k_{\Bn}(T_{-a}(x),T_{-a}(y))}\\
&=&1+|-a|=1+|a|.
\end{eqnarray*}
This completes the proof.
\end{proof}

It is natural to consider the quasiinvariance of the distance-ratio metric $j_G$ under M\"obius transformations.
But we  only have the following conjecture:

\begin{conjecture}\label{jconjecture}
Let $a\in\Bn$ and $h : \Bn \to \Bn$ be a M\"obius transformation of the unit ball onto itself with $h(a) =0\,.$
Then
$$\sup_{x,y\in\Bn\atop{x\neq y}}\frac{j_{\Bn}\left(h(x),h(y)\right)}{j_{\Bn}(x,y)}=1+|a|.$$
\end{conjecture}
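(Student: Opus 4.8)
The plan is to establish the conjectured identity by first reducing to the canonical map $h = T_a$ via Lemma~\ref{myle1}, since the distance-ratio metric of $\Bn$ is also invariant under orthogonal maps. With $a \ne 0$ fixed, the goal splits into two parts: proving the upper bound $j_{\Bn}(T_a(x),T_a(y)) \le (1+|a|) j_{\Bn}(x,y)$ for all $x,y \in \Bn$, and then exhibiting a sequence along which the ratio tends to $1+|a|$.

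For the upper bound, I would try to mimic the proof of Theorem~\ref{th1}. The key ingredient there was the pointwise estimate $\frac{1}{1-|x|} \le (1+|a|)\,\frac{|T_a'(x)|}{1-|T_a(x)|}$, equivalently $\frac{1+|x|}{1+|T_a(x)|} \le 1+|a|$, which was derived from the explicit formula $|T_a(x)| = \frac{|x-a|}{|a|\,|x-a^*|}$ together with the lower bound $|T_a(x)| \ge \frac{\bigl||x|-|a|\bigr|}{1-|a||x|}$. The difficulty is that $j_{\Bn}$ is not a path-length metric, so one cannot simply integrate a density estimate along a geodesic as was done for $k_{\Bn}$. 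Instead I would attack the quotient directly: writing $d(x) = 1-|x|$ and using the M\"obius identity $1 - |T_a(x)|^2 = |T_a'(x)|(1-|x|^2)$, one has $\frac{|T_a(x)-T_a(y)|}{\min\{1-|T_a(x)|, 1-|T_a(y)|\}}$ and needs to compare it with $\frac{|x-y|}{\min\{1-|x|,1-|y|\}}$. Here one can use the classical distortion formula $|T_a(x) - T_a(y)| = \frac{|x-y|}{|x-a^*|\,|y-a^*|\,|a|}$ (or the equivalent $|T_a(x)-T_a(y)|^2 = \frac{|x-y|^2 (1-|a|^2)^{-2}\cdots}{\cdots}$ obtained from \cite[Exercise 1.41]{vu1} and the hyperbolic invariance), reducing everything to an inequality among the quantities $|x|, |y|, |a|$ and the relevant inner products. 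The main obstacle I anticipate is controlling the $\min$ in the denominator: the minimizing point among $x,y$ may switch under $T_a$, so the argument likely needs a case division according to whether $|x| \le |y|$ or not and whether the same ordering is preserved for $|T_a(x)|, |T_a(y)|$; in the ``bad'' case one would bound $\log(1 + s) \le \log(1+t) + \log(\text{ratio})$ using monotonicity and the pointwise factor $1+|a|$.

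For the sharpness half, I would reuse the test configuration from the proof of Theorem~\ref{th1}: take $x = a$ and $y = (1+t)a$ with $t \to 0^+$. Then $j_{\Bn}(x,y) = \log\bigl(1 + \tfrac{t|a|}{1-|a|}\bigr)$ and, since $T_a(a) = 0$, $j_{\Bn}(T_a(x),T_a(y)) = \log\bigl(1 + |T_a(y)|\bigr)$ with $|T_a(y)| = \tfrac{t|a|}{1 - t|a| - (1+t)|a|^2}$, and a direct limit computation gives $\frac{j_{\Bn}(T_a(x),T_a(y))}{j_{\Bn}(x,y)} \to \frac{1-|a|}{1 - |a|^2} \cdot \tfrac{1}{1} = \frac{1}{1+|a|}$; so to reach the supremum $1+|a|$ one instead applies this to $T_{-a}$ in place of $T_a$, exactly as in the endgame of the proof of Theorem~\ref{th1}, obtaining a sequence with ratio tending to $1+|-a| = 1+|a|$. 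Combined with the upper bound this would pin the supremum at $1+|a|$. The honest expectation, reflected by the fact that the authors leave this as a conjecture, is that the upper bound step does not go through by the same soft argument as Theorem~\ref{th1}: the absence of a path-integral representation for $j_{\Bn}$ means the pointwise factor $1+|a|$ need not propagate to the two-point quantity, and a genuinely new estimate — or a counterexample to the clean constant — may be required. Hence the realistic deliverable here is the reduction, the sharpness construction, and a careful isolation of the single two-variable inequality on which the whole conjecture hinges.
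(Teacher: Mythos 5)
This statement is labelled a \emph{conjecture} in the paper, and the authors explicitly say they have been unable to prove it; there is no proof in the paper to compare against, only the Remark following the conjecture, which establishes the lower bound $\sup \ge 1+|a|$ by testing on the symmetric pairs $x=-te_a$, $y=te_a$ (yielding the clean closed form $1+\arctanh(|a|t)/\arctanh t \to 1+|a|$ as $t\to 0^+$). Your proposal is, by its own admission, not a proof: the entire content of the conjecture is the upper bound $j_{\Bn}(T_a(x),T_a(y))\le (1+|a|)\,j_{\Bn}(x,y)$, and you correctly diagnose why the argument of Theorem \ref{th1} does not transfer --- $j_{\Bn}$ has no path-integral representation, so the pointwise density estimate $\frac{1}{1-|x|}\le(1+|a|)\frac{|T_a'(x)|}{1-|T_a(x)|}$ cannot be integrated along a geodesic, and the $\min$ in the denominator of $j_{\Bn}$ can change which endpoint is active under $T_a$. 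That is exactly the obstruction that leaves this open, so your honest framing is accurate; but to count as a proof one would have to actually carry out (or refute) the two-variable inequality you isolate, and neither you nor the authors do so.

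Two smaller points on the half you do carry out. First, your sharpness configuration $x=a$, $y=(1+t)a$ has a minor slip: $\min\{d(x),d(y)\}=1-|a|-t|a|$, not $1-|a|$, so $j_{\Bn}(x,y)=\log\bigl(1+\tfrac{t|a|}{1-|a|-t|a|}\bigr)$; this is immaterial as $t\to0^+$. Second, the transfer to $T_{-a}$ does work here because for radial configurations with one point at the origin one has $j_{\Bn}=k_{\Bn}$, so the limit computation from Theorem \ref{th1} applies verbatim and gives $\sup\ge 1+|a|$. This is a legitimate alternative to the paper's Remark (which uses antipodal points and gets the stronger statement that the ratio along that family is monotone with supremum exactly $1+|a|$). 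So your deliverable matches what the paper actually establishes --- the lower bound for the supremum and a reduction of the conjecture to a single explicit inequality --- but the conjecture itself remains unproved.
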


\begin{remark}
  Let $e_a=a/|a|$, $t\in
  (0,1)$.
  \begin{eqnarray*}
  f(t)&=&\frac{j_\Bn(T_a(-te_a),T_a(te_a))}{j_\Bn(-te_a,te_a)}
  =\frac{\log\left(1+\frac{1+|a|}{1-|a|t}\frac{2t}{1-t}\right)}{\log\left(1+\frac{2t}{1-t}\right)}\\
  &=&\frac{\log\left(\frac{1+|a|t}{1-|a|t}\frac{1+t}{1-t}\right)}{\log\frac{1+t}{1-t}}=1+\frac{\arctanh(|a|t)}{\arctanh t}
  \end{eqnarray*}
  which is strictly decreasing from $(0,1)$ onto $(0,|a|)$.
  Hence we have
  $$\sup_{t\in(0,1)}\frac{j_\Bn(T_a(-te_a),T_a(te_a))}{j_\Bn(-te_a,te_a)}=1+|a|.$$
\end{remark}

\begin{lemma}\label{lemma4j}
  If $r\in(0,1)$, then the function
    \[
        f(t)=\frac{\log(1+t/(1-r))}{\arsh(t/\sqrt{(1-r^2)(1-(r-t)^2)})}
    \]
  is strictly decreasing from $(0,2r)$ onto $(1,1+r)$.
\end{lemma}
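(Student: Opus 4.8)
The plan is to write $f=N/D$ with
$$N(t)=\log\!\Bigl(1+\tfrac{t}{1-r}\Bigr),\qquad D(t)=\arsh\!\left(\frac{t}{\sqrt{(1-r^2)(1-(r-t)^2)}}\right),$$
both $C^1$ on $(0,2r)$ with $D>0$ there, both extending continuously to $t=0$ with $N(0)=D(0)=0$, and both having finite limits as $t\to 2r^-$. By the monotone form of l'H\^opital's rule it then suffices to show that $N'/D'$ is strictly decreasing on $(0,2r)$: this yields that $f$ is strictly decreasing, and the statement about the range then follows by computing the two one-sided limits of $f$.

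The computation of $N'$ is immediate, $N'(t)=1/(1-r+t)$. The only point requiring work is the simplification of $D'$. Writing $u(t)=t/\sqrt{(1-r^2)(1-(r-t)^2)}$, so that $D=\arsh u$ and $D'=u'/\sqrt{1+u^2}$, I would use the two elementary polynomial identities
$$(1-r^2)\bigl(1-(r-t)^2\bigr)+t^2=(1-r^2+rt)^2,\qquad 1-(r-t)^2-t(r-t)=1-r^2+rt.$$
The first gives $1+u(t)^2=(1-r^2+rt)^2/\bigl((1-r^2)(1-(r-t)^2)\bigr)$, and differentiating $u$ and applying the second gives $u'(t)=(1-r^2+rt)\big/\bigl(\sqrt{1-r^2}\,(1-(r-t)^2)^{3/2}\bigr)$. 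Upon dividing, the factor $1-r^2+rt$ cancels and one is left with the clean formula $D'(t)=1/(1-(r-t)^2)$. Hence, factoring $1-(r-t)^2=(1-r+t)(1+r-t)$,
$$\frac{N'(t)}{D'(t)}=\frac{1-(r-t)^2}{1-r+t}=1+r-t,$$
which is strictly decreasing on $(0,2r)$ and takes values in $(1-r,1+r)\subset(0,\infty)$, so in particular $D'\neq 0$ as required. Therefore $f$ is strictly decreasing on $(0,2r)$.

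It remains to identify the endpoints of the range. Since $N$ and $D$ are smooth at $0$ and vanish there, $f(0^+)=N'(0)/D'(0)=(1-r^2)/(1-r)=1+r$. As $t\to 2r^-$ we have $N(t)\to\log\frac{1+r}{1-r}$ and, using $\sh(2\arctanh r)=2r/(1-r^2)$, also $D(t)\to\arsh\frac{2r}{1-r^2}=2\arctanh r=\log\frac{1+r}{1-r}$, so $f(2r^-)=1$. Being continuous and strictly decreasing, $f$ maps $(0,2r)$ onto $(1,1+r)$.

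The main (and essentially only) obstacle is spotting the two algebraic identities that make $D'$ collapse; after that the argument is mechanical. For motivation it is worth noting that, for a unit vector $e$ and the points $x=re$, $y=(r-t)e$ in $\Bn$, one has $N(t)=j_{\Bn}(x,y)$ by \eqref{jmetric} (since $\min\{d(x),d(y)\}=1-r$ and $|x-y|=t$ when $t\in(0,2r)$) and $D(t)=\tfrac12\rho_{\Bn}(x,y)$ by \eqref{rho4ball}, so Lemma~\ref{lemma4j} is a sharp two-sided comparison of $j_{\Bn}$ and $\rho_{\Bn}$ for pairs of points on a common radius.
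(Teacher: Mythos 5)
Your proposal is correct and follows exactly the paper's route: split $f$ into the same numerator and denominator, verify $f_1'(t)/f_2'(t)=1+r-t$, and invoke the monotone form of l'H\^opital's rule, with the endpoint limits giving the range. You merely supply the algebraic simplification of $D'$ and the limit computations that the paper leaves implicit.
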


\begin{proof}
Let $f_1(t)=\log(1+t/(1-r))$ and $f_2(t)=\arsh(t/\sqrt{(1-r^2)(1-(r-t)^2)})$. Then we have
$f_1(0)=0=f_2(0)$, and $f_1'(t)/f_2'(t)=1+r-t$ which is strictly decreasing with respect to $t$.
Hence the monotonicity of $f$ follows from the monotone form of l'H\^opital's rule \cite[Theorem 1.25]{avv}.
\end{proof}

\begin{lemma}\label{kjlemma}
  For $x,y \in \Bn$ and $r=\max\{|x|,|y|\}$,
    \beq\label{kjestimate}
        \frac12\rho_{\Bn}(x,y) \le j_{\Bn}(x,y) \le \frac{1+r}{2} \rho_\Bn (x,y),
    \eeq
    and
    \beq\label{kjestimate2}
        \frac12\rho_{\Bn}(x,y) \le k_{\Bn}(x,y) \le \frac{1+r}{2} \rho_\Bn (x,y).
    \eeq
\end{lemma}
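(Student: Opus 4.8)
Since $j_{\Bn}\le k_{\Bn}$ holds in every domain (as recalled in the excerpt), the plan is to prove only the lower bound $\tfrac12\rho_{\Bn}(x,y)\le j_{\Bn}(x,y)$ and the upper bound $k_{\Bn}(x,y)\le\tfrac{1+r}{2}\rho_{\Bn}(x,y)$; the remaining two inequalities in \eqref{kjestimate}--\eqref{kjestimate2} then follow at once. One may assume $x\ne y$, and recall $d(z,\partial\Bn)=1-|z|$ for $z\in\Bn$. The whole argument then rests on the explicit formulas \eqref{jmetric} and \eqref{rho4ball} together with one geometric fact about hyperbolic balls.

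For the upper bound I would use the hyperbolic geodesic joining $x$ and $y$ as a test curve for $k_{\Bn}$. The key observation is that $\overline{\Bn(r)}=\{z:|z|\le r\}$ is a closed hyperbolic ball centered at the origin, hence hyperbolically convex; since $|x|,|y|\le r$, the $\rho_{\Bn}$-geodesic $\gamma$ from $x$ to $y$ is contained in $\overline{\Bn(r)}$, so $|z|\le r$ along $\gamma$. On $\gamma$ we then have $\dfrac{1}{1-|z|}=\dfrac{1+|z|}{1-|z|^2}\le\dfrac{1+r}{1-|z|^2}$, and therefore
\[
 k_{\Bn}(x,y)\le\int_\gamma\frac{|dz|}{1-|z|}\le(1+r)\int_\gamma\frac{|dz|}{1-|z|^2}=\frac{1+r}{2}\,\rho_{\Bn}(x,y),
\]
the last equality because $\gamma$ realizes $\rho_{\Bn}(x,y)$.

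For the lower bound the plan is to reduce to the case $|x|=|y|=r$. Assume $|y|=r\ge|x|=a$ and set $c=|x-y|$; note $c\le a+r\le 2r$. By \eqref{jmetric}, $j_{\Bn}(x,y)=\log\!\bigl(1+c/(1-r)\bigr)$, which depends only on $r$ and $c$. Choose $x'',y''\in\Bn$ with $|x''|=|y''|=r$ and $|x''-y''|=c$ (possible since $0<c\le 2r$); then $j_{\Bn}(x'',y'')=j_{\Bn}(x,y)$, while \eqref{rho4ball} gives $\sh^2\!\bigl(\tfrac12\rho_{\Bn}(x,y)\bigr)=c^2/((1-a^2)(1-r^2))\le c^2/(1-r^2)^2=\sh^2\!\bigl(\tfrac12\rho_{\Bn}(x'',y'')\bigr)$, so $\rho_{\Bn}(x,y)\le\rho_{\Bn}(x'',y'')$. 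Hence $j_{\Bn}(x,y)/\rho_{\Bn}(x,y)\ge j_{\Bn}(x'',y'')/\rho_{\Bn}(x'',y'')$, and it suffices to prove $\tfrac12\rho_{\Bn}(x'',y'')\le j_{\Bn}(x'',y'')$. Since $\sh$ is strictly increasing, with $w=c/(1-r)$ this is equivalent to $\dfrac{w}{1+r}=\sh\!\bigl(\tfrac12\rho_{\Bn}(x'',y'')\bigr)\le\sh\!\bigl(j_{\Bn}(x'',y'')\bigr)=\dfrac{(1+w)-(1+w)^{-1}}{2}=\dfrac{w(w+2)}{2(1+w)}$, and a one-line manipulation reduces this to $w(1-r)\le 2r$, i.e.\ to $c\le 2r$, which holds. (Letting $t\to 0$ and $t\to 2r$ in the diametral configuration of Lemma \ref{lemma4j} shows the constants $\tfrac{1+r}{2}$ and $\tfrac12$ are sharp.)

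The step I expect to be the main obstacle is the hyperbolic-convexity claim in the upper bound: making precise that the $\rho_{\Bn}$-geodesic between $x$ and $y$ stays inside $\overline{\Bn(r)}$. This is standard — hyperbolic balls are convex, for instance because $z\mapsto\rho_{\Bn}(0,z)$ is a convex function along geodesics, or directly because the point of a circular arc orthogonal to $S^{n-1}$ that is farthest from $0$ is an endpoint — but it is the one place where something beyond the formulas \eqref{jmetric}, \eqref{rho4ball} is needed. An alternative route for the upper bound that bypasses convexity is to reduce to the diametral case, where for fixed $r$ and $c$ the ratio $j_{\Bn}/\rho_{\Bn}$ is largest, and invoke Lemma \ref{lemma4j}; however, the $k_{\Bn}$-version in \eqref{kjestimate2} still seems to require the geodesic argument.
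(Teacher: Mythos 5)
Your proof is correct, but it is organized differently from the paper's. The paper proves all four inequalities separately: the two lower bounds are quoted from the literature (\cite[3.3]{vu1} and \cite[Lemma 7.56]{avv}), the upper bound in \eqref{kjestimate2} is obtained exactly as you do (integrating $1/(1-|z|)\le(1+r)/(1-|z|^2)$ along the hyperbolic geodesic), and the upper bound in \eqref{kjestimate} is proved independently, by first replacing $|y|$ with $|x|-|x-y|$ and then invoking the monotone l'H\^opital rule through Lemma \ref{lemma4j}. You instead exploit $j_{\Bn}\le k_{\Bn}$ to reduce the lemma to just two inequalities, give a self-contained elementary proof of $\tfrac12\rho_{\Bn}\le j_{\Bn}$ (reduction to $|x''|=|y''|=r$, then the identity $\sh(\log(1+w))=w(w+2)/(2(1+w))$, ending with $|x-y|\le 2r$), and dispense with Lemma \ref{lemma4j} altogether; all of these steps check out. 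What your route costs is that the upper bound for $j_{\Bn}$ now inherits the geodesic argument's reliance on the hyperbolic convexity of $\overline{\Bn(r)}$ --- a fact you rightly single out as the one nontrivial geometric input, and one which the paper's own proof of \eqref{kjestimate2} also uses tacitly (the step $1+|z|\le 1+r$ along the geodesic) without comment; the justifications you sketch for it are standard and adequate. What it buys is a shorter, citation-free proof of the lower bounds and the elimination of an auxiliary lemma.
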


\begin{proof}
  The left-hand sides of the inequalities (\ref{kjestimate}) and (\ref{kjestimate2}) follow from \cite[3.3]{vu1} and \cite[Lemma 7.56]{avv}, respectively. 
  
  For the right-hand side of the inequality (\ref{kjestimate2}), let $\gamma$ be the hyperbolic geodesic segment joining $x$ and $y$. Then
  $$k_{\Bn}(x,y)\leq\int_\gamma \frac{|dz|}{1-|z|}\leq\frac{1+r}{2}\int_\gamma\frac{2|dz|}{1-|z|^2}=\frac{1+r}{2}\rho_\Bn (x,y).$$

  Now we prove the right-hand side of the inequality  (\ref{kjestimate}). We may assume that $|x|\geq|y|$. By (\ref{jmetric}) and (\ref{rho4ball}),
   \begin{eqnarray*}
    \frac{2j_\Bn(x,y)}{\rho_\Bn(x,y)}&=&\frac{\log(1+|x-y|/(1-|x|))}{\arsh(|x-y|/\sqrt{(1-|x|^2)(1-|y|^2)})}\\
                                     &\le&\frac{\log(1+|x-y|/(1-|x|))}{\arsh(|x-y|/\sqrt{(1-|x|^2)(1-(|x|-|x-y|)^2)})}\\
                                      &\le&1+|x|\leq1+r,
   \end{eqnarray*}
  where the second inequality follows from Lemma \ref{lemma4j}.
\end{proof}

\begin{lemma}\label{qhdensesti}
Let $D\subsetneq\C$ be a domain and $f:D\to D'=f(D)$ is a conformal mapping, then
$$
 \dfrac1{4d(z,\partial D)}\leq\dfrac{|f'(z)|}{d(f(z),\partial D')}\leq\dfrac4{d(z,\partial D)},\quad z\in D.
$$
\end{lemma}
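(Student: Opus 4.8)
The plan is to obtain both inequalities from the classical Koebe one-quarter theorem, applied first to $f$ and then to its inverse, after normalizing by an affine change of variables.

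To prove the right-hand inequality I would fix $z_0 \in D$ and set $R = d(z_0,\partial D)$, so that $B(z_0,R)\subset D$. Since $f$ is conformal it is univalent on $D$ and $f'$ never vanishes, hence
$$
  g(w) = \frac{f(z_0+Rw)-f(z_0)}{R\, f'(z_0)}, \qquad w\in\D,
$$
is univalent on the unit disc with $g(0)=0$ and $g'(0)=1$. The Koebe one-quarter theorem then gives $g(\D)\supset B(0,1/4)$, which after undoing the normalization reads $f(B(z_0,R))\supset B(f(z_0),\tfrac14 R|f'(z_0)|)$. As $f(B(z_0,R))\subset D'$, this ball is contained in $D'$, so $d(f(z_0),\partial D')\ge \tfrac14 R|f'(z_0)| = \tfrac14 |f'(z_0)|\, d(z_0,\partial D)$, which is exactly the asserted upper bound.

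For the left-hand inequality I would apply the very same reasoning to the conformal map $f^{-1}\colon D'\to D$ at the point $w_0 = f(z_0)$, using $f^{-1}(w_0)=z_0$ and $(f^{-1})'(w_0) = 1/f'(z_0)$; this yields $d(z_0,\partial D)\ge \tfrac14\, d(f(z_0),\partial D')/|f'(z_0)|$, equivalently $|f'(z_0)|/d(f(z_0),\partial D')\ge 1/(4\,d(z_0,\partial D))$. Since $z_0\in D$ is arbitrary, both estimates hold on all of $D$.

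The argument has no genuine obstacle; the only points deserving (routine) care are that $g$ is univalent on the \emph{whole} of $\D$, which is precisely why $R$ is chosen to equal $d(z_0,\partial D)$, and that $f'\neq 0$ everywhere, which is automatic for a conformal map. I would also remark that the constant $4$ is sharp: for $D=\D$ and $f$ the Koebe function $z\mapsto z/(1-z)^2$ one has $D'=\C\setminus(-\infty,-1/4]$, and equality holds in both inequalities at $z=0$. This is the example behind the sharpness statement of Proposition \ref{qh4conformal}.
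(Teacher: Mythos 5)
Your proof is correct and takes essentially the same route as the paper's: both normalize $f$ on the ball $B(z_0,d(z_0,\partial D))$, invoke the Koebe one-quarter theorem to get the upper bound, and then apply the identical argument to $f^{-1}$ for the lower bound. The added sharpness remark via the Koebe function is consistent with how the paper uses this example in the proof of Proposition \ref{qh4conformal}.
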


\begin{proof}
For a fixed $z_0\in D$, we define by
$$g(z)=\dfrac{f(z_0+d(z_0,\partial D)z)-f(z_0)}{d(z_0,\partial D)f'(z_0)}$$
a normalized univalent function $g$ on the unit disk $\D$.
Then the Koebe one-quarter theorem yields that $g(\D)$ contains the disk $|w|<1/4$.
Thus
$$\dfrac{d(f(z_0),\partial D')}{d(z_0,\partial D)|f'(z_0)|}\geq d(g(0),\partial g(\D))\geq\dfrac14,$$
which gives
$$
\dfrac{|f'(z_0)|}{d(f(z_0),\partial D')}\leq\dfrac4{d(z_0,\partial D)}.
$$
Applying the above discussion to $f^{-1}$, we have
$$
\dfrac{|f^{-1'}(f(z_0))|}{d(f^{-1}(f(z_0)),\partial D)}\leq\dfrac4{d(f(z_0),\partial D')},
$$
and this is equivalent to
$$
\dfrac1{4d(z_0,\partial D)}\leq\dfrac{|f'(z_0)|}{d(f(z_0),\partial D')}.
$$
This completes the proof since $z_0\in D$ is arbitrary.
\end{proof}

\begin{proof}[Proof of Proposition \ref{qh4conformal}]
Let $\gamma$ be a quasihyperbolic geodesic segment joining $z$ and $w$ in $D$ and $\gamma'=f(\gamma)$. Then by Lemma \ref{qhdensesti} we have
\begin{eqnarray*}
k_{D'}(f(x),f(y))&\leq&\int_{\gamma'}\dfrac{|dw|}{d(w,\partial D')}=\int_{\gamma}\dfrac{|f'(z)||dz|}{d(f(z),\partial D')}\\
&\leq&4\int_{\gamma}\dfrac{|dz|}{d(z,\partial D)}=4k_D(x,y).
\end{eqnarray*}
A similar argument yields
$$\dfrac14 k_D(x,y)\leq k_{D'}(f(x),f(y)).$$

For the sharpness of the constant, let the conformal mapping be the Koebe function $f(z)=z/(1-z)^2$ on the unit disk $\D$ and $G=f(\D)=\C\setminus(-\infty,-1/4]$. Let $z=te^{i\theta}$ and fix $\theta$ to be sufficiently small such that $\Real(z)>0$ and $\Real(f(z))>0$. Then by the formula (\ref{kmetric}) we have
$$k_{G}(f(z),f(\overline{z})=k_{\C\setminus\{(-1/4,0)\}}(f(z),\overline{f(z)})=2\arctan\dfrac{\Imag(f(z))}{\Real(f(z))+1/4},$$
and by (\ref{rho4ball})
$$\dfrac12\rho_{\D}(z,\overline{z})=\arsh\dfrac{2\Imag(z)}{1-|z|^2}.$$
Hence
\begin{eqnarray*}
\lim_{t\to0}\dfrac{k_G(f(z),f(\overline{z}))}{k_\D(z,\overline{z})}&=&
   \lim_{t\to0}\dfrac{k_{\C\setminus\{(-1/4,0)\}}(f(z),\overline{f(z)})}{\rho_{\D}(z,\overline{z})/2}
   =\lim_{t\to0}\dfrac{2\arctan\dfrac{\Imag(f(z))}{\Real(f(z))+1/4}}{\arsh\dfrac{2\Imag(z)}{1-|z|^2}}\\
&=&\lim_{t\to0}\dfrac{2\Imag(f(z))}{\Real(f(z))+1/4}\dfrac{1-|z|^2}{2\Imag(z)}=4,
\end{eqnarray*}
where the first equality follows from (\ref{kjestimate}).
\end{proof}

Now we study the quasiinvariance property of the quasihyperbolic metric and the distance-ratio metric in the unit ball under quasiconformal mappings. For this purpose we need the following quasiinvariance property of $\delta_D$ (\cite{sei}) which is defined in an open subset $D\subset\overline{\Rn}$ with $\mbox{Card}(\partial D)\geq2$ as
$$\delta_D(x,y)=\sup_{a,b\in\partial D}\log(1+|a,x,b,y|)$$
for all $x,y\in D$. Here $$|a,x,b,y|=\frac{q(a,b)q(x,y)}{q(a,x)q(b,y)}$$ is the absolute cross ratio and $q(x,y)$ is the chordal metric defined in (\ref{qmetric}).

\begin{theorem}\label{SeittenrantaDelta}\cite[Theorem 1.2]{sei}
  Let $f \colon \overline{\Rn}\to\overline{\Rn}$ be a $K$-quasiconformal mapping, $D$ and $D'=f(D)$ open sets of $\overline{\Rn}$ with $\emph{Card}(\partial D)\geq2$, and $x,y \in D$. Then
    \[
      \delta_{D'}(f(x),f(y)) \le b \max\{\delta_D (x,y),\delta_D (x,y)^\alpha\},
    \]
  where $\alpha=K^{1/(1-n)}=1/\beta$ and $b=b(K,n) = \lambda_n^{\beta-1} \beta \eta_{K,n}(1)$. Here $b$ tends to 1 as $K$ tends to 1.
\end{theorem}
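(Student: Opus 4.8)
The plan is to reduce the statement to a single pointwise distortion estimate for the absolute cross ratio and then to pass that estimate through the supremum defining $\delta$. Since $f$ is a homeomorphism of $\overline{\Rn}$ with $f(D)=D'$, it maps $\partial D$ onto $\partial D'$, so
\[
  \delta_{D'}(f(x),f(y))=\sup_{a,b\in\partial D}\log\bigl(1+|f(a),f(x),f(b),f(y)|\bigr).
\]
Hence it suffices to produce an explicit majorant $\Psi_{K,n}$ with $|f(a),f(x),f(b),f(y)|\le\Psi_{K,n}(m)$, where $m:=|a,x,b,y|$, valid for all four-tuples, and then to verify the one-variable inequality
\[
  \log\bigl(1+\Psi_{K,n}(m)\bigr)\le b\,\max\{\log(1+m),(\log(1+m))^{\alpha}\},\qquad m\ge0 .
\]
Because $\log(1+m)$ and $(\log(1+m))^{\alpha}$ are both increasing in $m$, taking the supremum over $a,b\in\partial D$ on both sides then yields exactly $b\,\max\{\delta_D(x,y),\delta_D(x,y)^{\alpha}\}$, as required.

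The heart of the argument is the construction of $\Psi_{K,n}$, and here the single deep input is that a $K$-quasiconformal self-map of $\overline{\Rn}$ is quasim\"obius with respect to the chordal metric $q$, with a modulus controlled explicitly by Vuorinen's quasisymmetry bound $\eta_{K,n}$ \cite{vu1b} together with the capacity of the Teichm\"uller ring, through which the Gr\"otzsch constant $\lambda_n$ enters. I would invoke this to obtain an explicit $\Psi_{K,n}$ dominating the image cross ratio. Its decisive feature is a two-regime behaviour inherited from the power-type distortion of ring moduli under quasiconformal maps: $\Psi_{K,n}(m)$ grows like $\lambda_n^{\beta-1}\beta\,\eta_{K,n}(1)\,m^{\alpha}$ for small $m$ (the exponent $\alpha=K^{1/(1-n)}$ being precisely the H\"older exponent of the modulus estimate), while for large $m$ it grows at the slower logarithmic rate that makes $\log(1+\Psi_{K,n})$ essentially linear in $\log(1+m)$.

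With $\Psi_{K,n}$ in hand the proof is bookkeeping. In the small-$m$ regime one has $\max\{\log(1+m),(\log(1+m))^{\alpha}\}=(\log(1+m))^{\alpha}\sim m^{\alpha}$, and the leading coefficient of $\Psi_{K,n}$ matches $b=\lambda_n^{\beta-1}\beta\,\eta_{K,n}(1)$; in the large-$m$ regime the maximum equals $\log(1+m)$ and the linear growth of $\log(1+\Psi_{K,n})$ is absorbed by the same constant. A monotonicity check across the crossover $m\approx e-1$ then establishes the displayed one-variable inequality for all $m$. Finally, letting $K\to1$ gives $\beta=K^{1/(n-1)}\to1$, hence $\lambda_n^{\beta-1}\to1$, while $\eta_{K,n}(1)\to1$ by the continuity of Vuorinen's majorant at $K=1$; therefore $b\to1$.

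The main obstacle is the second step: converting the three-point quasisymmetry inequality into a four-point cross-ratio inequality \emph{without losing the sharp multiplicative constant}. Splitting the cross ratio into two chordal ratios and estimating each by $\eta_{K,n}$ separately is too wasteful, producing a product $\eta_{K,n}(s)\eta_{K,n}(t)$ with $st=m$ rather than the single factor $\eta_{K,n}(1)$ that appears in $b$. The correct route exploits the M\"obius invariance of the absolute cross ratio to normalise three of the four points before applying the distortion estimate, so that in the governing small-$m$ regime the quasisymmetry majorant is effectively evaluated only at the neutral ratio $t=1$. Pinning the constant down to exactly $\lambda_n^{\beta-1}\beta\,\eta_{K,n}(1)$ is what forces one to use the precise Teichm\"uller-ring capacity asymptotics underlying $\eta_{K,n}$, and this is where the genuine work lies.
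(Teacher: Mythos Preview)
The paper does not prove this theorem at all: it is quoted verbatim as \cite[Theorem 1.2]{sei} and used as a black box to derive Corollary~\ref{seittenranta} and, from that, Theorem~\ref{kdistortion}. There is therefore no ``paper's own proof'' against which to compare your proposal.

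That said, your sketch is a plausible outline of how Seittenranta's argument actually goes: reduce to a cross-ratio distortion bound via M\"obius normalisation, feed in Vuorinen's explicit quasisymmetry majorant $\eta_{K,n}$ together with the Teichm\"uller/Gr\"otzsch ring asymptotics, and then verify a one-variable inequality for $\log(1+\cdot)$. The honest gap you yourself flag is the real one: getting from a three-point $\eta_{K,n}$ bound to a four-point cross-ratio bound with \emph{exactly} the constant $\lambda_n^{\beta-1}\beta\,\eta_{K,n}(1)$, and then establishing the claimed one-variable inequality for all $m\ge 0$ (not just asymptotically for small and large $m$), is the substance of Seittenranta's proof and is not carried out here. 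For the purposes of the present paper, however, none of that is needed; the theorem is simply cited.
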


In the above theorem, $\lambda_n$ is the Gr\"otzsch ring constant, with $\lambda_n\in[4,2e^{n-1})$ and $\lambda_2=4$ (see \cite[Ch.12]{avv}). For the function $\eta_{K,n}$ and estimates for $\eta_{K,n}(1)$ see \cite[Ch.14]{avv}.

\begin{corollary}\label{seittenranta}
  Let $f \colon \Bn \to \Bn$ be a $K$-quasiconformal mapping of the unit ball onto itself, and $x,y \in \Bn$. Then
    \[
      \rho_{\Bn}(f(x),f(y)) \le b \max\{\rho_\Bn (x,y),\rho_\Bn (x,y)^\alpha\},
    \]
  where the constants are the same as in Theorem \ref{SeittenrantaDelta}.
\end{corollary}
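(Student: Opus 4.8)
The plan is to obtain the estimate from Theorem \ref{SeittenrantaDelta} applied with $D=D'=\Bn$. Two preliminary facts are needed, and I would establish them in turn.

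The first is the identity
\beq\label{deltaeqrho}
  \delta_{\Bn}(x,y)=\rho_{\Bn}(x,y),\qquad x,y\in\Bn .
\eeq
Both sides are invariant under M\"obius transformations of $\Bn$: the hyperbolic metric is, and $\delta_{\Bn}$ is because the absolute cross ratio is preserved by every M\"obius transformation of $\overline{\Rn}$ while such a self-mapping of $\Bn$ permutes $\partial\Bn=S^{n-1}$. Hence, given $x,y\in\Bn$, I would choose a M\"obius self-mapping of $\Bn$ taking $x$ to $0$ and $y$ to a point $se_1$ with $s\in[0,1)$, so that it is enough to verify (\ref{deltaeqrho}) for this pair. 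Here (\ref{rho4ball}) gives $\rho_{\Bn}(0,se_1)=\log\frac{1+s}{1-s}$; on the other hand, for $a,b\in S^{n-1}$ the Euclidean form of the cross ratio gives $|a,0,b,se_1|=s\,|a-b|/|b-se_1|$, which is maximal when $a=-b$ (forcing $|a-b|=2$) and $b=e_1$ (forcing $|b-se_1|=1-s$), so that $\sup_{a,b\in S^{n-1}}|a,0,b,se_1|=2s/(1-s)$ and hence $\delta_{\Bn}(0,se_1)=\log\bigl(1+\tfrac{2s}{1-s}\bigr)=\log\frac{1+s}{1-s}$; this proves (\ref{deltaeqrho}) (cf.\ \cite{sei}).

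The second fact is that $f$ is the restriction to $\Bn$ of a $K$-quasiconformal self-mapping of $\overline{\Rn}$. For this I would use that a $K$-quasiconformal self-mapping of $\Bn$ extends to a homeomorphism of $\overline{\Bn}$ with $f(S^{n-1})=S^{n-1}$ \cite{v}, and then reflect in the unit sphere: writing $\sigma(x)=x^*=x/|x|^2$, put $\tilde f=f$ on $\overline{\Bn}$ and $\tilde f=\sigma\circ f\circ\sigma$ on $\overline{\Rn}\setminus\Bn$. Since $\sigma$ fixes $S^{n-1}$ pointwise and $f(S^{n-1})=S^{n-1}$, the two definitions agree on $S^{n-1}$, so $\tilde f$ is a homeomorphism of $\overline{\Rn}$ with $\tilde f(\Bn)=\Bn$; and since $\sigma$ is a M\"obius transformation, hence $1$-quasiconformal, $\sigma\circ f\circ\sigma$ is $K$-quasiconformal off $\overline{\Bn}$, whence by the removability of the sphere $S^{n-1}$ the map $\tilde f$ is $K$-quasiconformal on all of $\overline{\Rn}$.

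With these in hand, Theorem \ref{SeittenrantaDelta} applied to $\tilde f$ with $D=D'=\Bn$ (note $\mathrm{Card}(\partial\Bn)\ge2$) gives, for $x,y\in\Bn$,
\[
  \delta_{\Bn}(f(x),f(y))\le b\max\{\delta_{\Bn}(x,y),\delta_{\Bn}(x,y)^{\alpha}\},
\]
and substituting (\ref{deltaeqrho}) yields the claim with the same $\alpha$ and $b$. The step I expect to be the main obstacle is the supremum in (\ref{deltaeqrho})---namely verifying that no pair $a,b\in S^{n-1}$ beats $a=-e_1,\,b=e_1$; the reduction by M\"obius invariance and the reflection argument are routine given the standard boundary-extension and removability facts quoted above.
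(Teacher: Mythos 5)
Your proposal is correct and follows essentially the same route as the paper: extend $f$ by reflection to a $K$-quasiconformal map of $\overline{\Rn}$, use the identity $\delta_{\Bn}=\rho_{\Bn}$, and apply Theorem \ref{SeittenrantaDelta} with $D=D'=\Bn$. The only difference is that you verify in detail the two facts (the cross-ratio computation giving $\delta_{\Bn}=\rho_{\Bn}$ and the reflection extension) that the paper simply cites as well known, and both verifications are sound.
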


\begin{proof}
   It is well known that by reflection $f$ can be extended quasiconformally to the whole space $\overline{\Rn}$. By the monotonicity property of Seittenranta's metric $\delta_D$, we have $\delta_{\Bn}(f(x),f(y)) \le \delta_{f(\Bn)}(f(x),f(y))$. Since $\delta_{\Bn} = \rho_{\Bn}$, this theorem follows from Seittenranta's theorem.
\end{proof}

\begin{remark}
   For $n=2$, the corollary can be found in \cite[Theorem 1.10]{bv} with a better constant.
\end{remark}

\begin{proof}[Proof of Theorem \ref{kdistortion}.]
  By Corollary \ref{seittenranta} and (\ref{kjestimate}), we have
  \begin{eqnarray*}
    m(f(x),f(y)) &\le& \frac{1+r}2 b \max \{ 2m(x,y), 2^\alpha m(x,y)^\alpha \}\\
                      &\le& (1+r)b \max \{ m(x,y), m(x,y)^\alpha \}.
  \end{eqnarray*}
  The assertion follows by choosing $c=(1+r)b$.
\end{proof}

For $r \in (0,1)$ and $K \ge 1$ we define the distortion function
 \[
  \varphi_{K,n}(r) = \frac{1}{\gamma_n^{-1}(K \gamma_n(1/r))},\,\,\,\,\alpha = K^{1/(1-n)},
\]
where $\gamma_n(t)$ is the  capacity of the Gr\"otzsch ring, i.e., the modulus of the curve family joining the closed unit ball and the ray $[te_1,\infty)$.
It is well known that if $f: \Bn\to\Bn$ is a nonconstant $K$-quasiconformal mapping, then
\beq\label{rhodistortion}
\tanh\frac{\rho_\Bn(f(x),f(y))}2\leq\varphi_{K,n}\left( \tanh\frac{\rho_\Bn(x,y)}2 \right),
\eeq
holds for all $x,y\in\Bn$ \cite[Theorem 11.2]{vu1}. Combining (\ref{kjestimate}) and (\ref{rhodistortion}), we get that
$$
m(f(x),f(y))\leq(1+r)\arctanh\varphi_{K,n}(\tanh m(x,y)),\quad\,m\in\{j_{\Bn},k_{\Bn}\},
$$
holds for all $x,y\in\Bn(r)$. Thus the following conjecture will give an improvement of Theorem \ref{kdistortion}.

\begin{conjecture}\label{phiconjecture}
For $K>1$, $n>2$ and $r\in(0,1)$,
 \[
   \arctanh\varphi_{K,n}(\tanh r)\leq2\arctanh\left(\varphi_{K,2}\Big(\tanh\frac12\Big)\right)\max\{r,r^\alpha\}.
 \]
\end{conjecture}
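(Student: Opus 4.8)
A possible route towards Conjecture \ref{phiconjecture} is the following. Write $\alpha=K^{1/(1-n)}\in(0,1)$, set $C=2\arctanh\varphi_{K,2}(\tanh\tfrac12)$, and let $\psi_{K,n}(r)=\arctanh\varphi_{K,n}(\tanh r)$ be the sharp hyperbolic distortion function behind (\ref{rhodistortion}). Since $r^\alpha>r$ for $r\in(0,1)$ we have $\max\{r,r^\alpha\}=r^\alpha$, so the conjecture is equivalent to the one-variable inequality $\psi_{K,n}(r)\le C\,r^\alpha$ on $(0,1)$. Note that $\psi_{1,n}=\mathrm{id}$, $\alpha=1$ and $C=1$ when $K=1$, so the asserted inequality degenerates to an identity as $K\to1$; thus it is a perturbation statement, and its content is uniformity over $K>1$ and $n\ge3$.

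First I would study the ratio $g(r)=\psi_{K,n}(r)/r^\alpha$ on $(0,1)$. Since $\psi_{K,n}(0)=0=0^\alpha$, the natural tool is the monotone l'H\^opital rule \cite[Theorem 1.25]{avv} already used in Lemma \ref{lemma4j}: one checks whether $r\mapsto\psi_{K,n}'(r)/(\alpha r^{\alpha-1})$ is monotone, where $\psi_{K,n}'(r)=\varphi_{K,n}'(\tanh r)(1-\tanh^2 r)/(1-\varphi_{K,n}(\tanh r)^2)$, using the differentiation formula and monotonicity properties of $\varphi_{K,n}$ from \cite[Ch.\ 10--14]{avv}. If $g$ is monotone, then $\sup_{(0,1)}g$ equals one of the endpoint limits: $g(0^+)$, which by the Hersch--Pfluger-type bound $\varphi_{K,n}(t)\le\lambda_n^{1-\alpha}t^\alpha$ \cite[Theorem 11.2]{vu1} is $\le\lambda_n^{1-\alpha}$, or $g(1^-)=\psi_{K,n}(1)=\arctanh\varphi_{K,n}(\tanh 1)$; the conjecture then reduces to the two scalar inequalities $\lambda_n^{1-\alpha}\le C$ and $\arctanh\varphi_{K,n}(\tanh1)\le C$. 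Should $g$ fail to be monotone, one instead splits $(0,1)$ at the point where $\lambda_n^{1-\alpha}(\tanh r)^\alpha=1$: on the left one uses $\psi_{K,n}(r)\le\arctanh(\lambda_n^{1-\alpha}(\tanh r)^\alpha)$ together with $\arctanh s\le(C/\lambda_n^{1-\alpha})\,s$ (valid once $\lambda_n^{1-\alpha}\le C$), and on the right the uniform bound $\psi_{K,n}(r)\le\psi_{K,n}(1)$ against a lower bound for $r^\alpha$.

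The hard part will be the two scalar inequalities, in particular $\lambda_n^{1-\alpha}\le2\arctanh\varphi_{K,2}(\tanh\tfrac12)$ for all $K>1$ and $n\ge3$. Using $\lambda_n<2e^{n-1}$ \cite[Ch.\ 12]{avv} and the elementary fact that $u\mapsto(1-K^{-u})/u$ is decreasing on $(0,\infty)$ (so that $(n-1)(1-\alpha)<\log K$ for $n\ge3$), one obtains only the crude estimate $\lambda_n^{1-\alpha}<2^{1-\alpha}K<2K$, which is not strong enough to beat $2\arctanh\varphi_{K,2}(\tanh\tfrac12)=1+O(K-1)$ near $K=1$. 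Closing this gap would require a sharp lower bound for $\varphi_{K,2}(\tanh\tfrac12)$ in terms of $K$, through the modulus function $\mu$ and the identities for $\varphi_{K,2}$ in \cite[Ch.\ 10]{avv}, and possibly a sharper upper bound for $\lambda_n$. Establishing these delicate special-function estimates, together with the monotonicity of $g$, is exactly the obstacle, which is presumably why the statement is recorded only as a conjecture.
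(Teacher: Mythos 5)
This statement is recorded in the paper as a \emph{conjecture}, and your proposal, by its own admission, does not prove it either: it is a program whose two decisive steps --- the monotonicity of $g(r)=\psi_{K,n}(r)/r^{\alpha}$ and the scalar inequality $\lambda_n^{1-\alpha}\le 2\arctanh\varphi_{K,2}(\tanh\tfrac12)$ --- are left open. So there is no complete argument to assess, only a plan. Within that plan there is one step that would fail as written: in your fallback splitting you invoke $\arctanh s\le(C/\lambda_n^{1-\alpha})\,s$ for $s=\lambda_n^{1-\alpha}(\tanh r)^{\alpha}$ on the left subinterval, but you split precisely at the point where $s=1$, and $\arctanh s/s\to\infty$ as $s\to1^{-}$, so no linear bound $\arctanh s\le\mu s$ with finite $\mu$ can hold up to the splitting point. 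You would have to cut at a value of $s$ bounded away from $1$ and then control $\psi_{K,n}$ on the resulting middle interval by other means, which reintroduces exactly the kind of uniform-in-$(K,n)$ estimate you identify as the obstacle. Your preliminary reductions ($\max\{r,r^{\alpha}\}=r^{\alpha}$ since $\alpha\in(0,1)$, the degeneration to an identity at $K=1$, and the endpoint values $g(0^{+})\le\lambda_n^{1-\alpha}$, $g(1^{-})=\arctanh\varphi_{K,n}(\tanh 1)$) are all correct.

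For comparison, the paper's own route (in the remark following the conjecture) is entirely different and much shorter: the inequality is known for $n=2$ by \cite[Lemma 4.8]{bv}, so the general case would follow from the dimension monotonicity $\varphi_{K,n+1}(r)\le\varphi_{K,n}(r)$ for $n\ge2$, which is itself a listed open problem \cite[Open Problem 5.2(10)]{avv89}. That reduction buys a clean separation of the difficulty into a single well-studied special-function question, at the cost of depending on an open problem; your direct approach avoids that dependence but replaces it with two new unproved special-function inequalities plus a monotonicity claim, none of which look easier. Neither route currently closes the conjecture.
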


\begin{remark}
Conjecture \ref{phiconjecture} is true for $n=2$ \cite[Lemma 4.8]{bv}. Hence the conjecture follows if we can prove $$
 \varphi_{K,n+1}(r)\leq\varphi_{K,n}(r)
$$
for $n\geq2$ (see \cite[Open Problem 5.2(10)]{avv89}).
\end{remark}



\section{Comparison of quasihyperbolic and chordal metrics}


G.\,J. Martin and B.\,G. Osgood \cite[page 38]{mo} showed
that for $x,y\in\R^n\setminus\{0\}$ and $n\geq2$
$$k_{\R^n\setminus\{0\}}(x,y)=\sqrt{\theta^2+\log^2\frac{|x|}{|y|}},$$
where $\theta=\measuredangle(x,0,y)\in[0,\pi]$.
Since the quasihyperbolic metric is invariant under translations, it is clear that
for $z\in\Rn$, $x,y\in\R^n\setminus\{z\}$ and $n\geq2$
\beq\label{kmetric}
k_{\R^n\setminus\{z\}}(x,y)=\sqrt{\theta^2+\log^2\frac{|x-z|}{|y-z|}},
\eeq
where $\theta=\measuredangle(x,z,y)\in[0,\pi]$.

The chordal metric in $\overline{\R^n}=\R^n\cup\{\infty\}$ is defined by
\beq\label{qmetric}
q(x,y)=\left\{
\begin{array}{ll}
\frac{|x-y|}{\sqrt{(1+|x|^2)(1+|y|^2)}}\,,&x\neq\infty\neq y,\\
\frac{1}{\sqrt{1+|x|^2}}\,,&y=\infty.
\end{array}\right.
\eeq

M. Vuorinen posed the following open problem \cite[8.2]{vu2}:
Does there exist a constant $c$ such that
$$q(x,y)\leq c k(x,y)$$
for all $x,y\in\R^n\setminus\{0\}$? R. Kl\'en \cite[Theorem 3.8]{k} solved this problem,
and his theorem says
$$\sup_{x,y\in\Rn\setminus\{0\}\atop{x\neq y}}\frac{q(x,y)}{k_{\Rn\setminus\{0\}}(x,y)}=\frac12.$$

We compare next the quasihyperbolic metric and the chordal metric for the general punctured space $\R^n\setminus\{z\}$, and hence give a solution to an open problem \cite[ Open problem 3.18]{k}.

\begin{theorem} \label{kqratio}
For $G=\R^n\setminus\{z\}$ and  $z\in\R^n$ we have
$$\sup_{x,y\in G\atop{x\neq y}}\frac{q(x,y)}{k_G(x,y)}=\frac{|z|+\sqrt{1+|z|^2}}{2}.$$
\end{theorem}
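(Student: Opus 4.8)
The plan is to reduce the general punctured space to the special case $z\neq 0$ that is essentially handled by Kl\'en's theorem, but with a twist: the chordal metric $q$ is \emph{not} invariant under the translations that preserve $k_G$, so the supremum genuinely depends on $|z|$. First I would observe that by a rotation we may assume $z = |z| e_1$, and by the translation invariance of the quasihyperbolic metric, $k_G(x,y) = k_{\R^n\setminus\{0\}}(x-z, y-z)$, so writing $u = x-z$, $v = y-z$ the problem becomes
\[
  \sup_{u,v\in\R^n\setminus\{0\},\, u\neq v} \frac{q(u+z, v+z)}{k_{\R^n\setminus\{0\}}(u,v)},
\]
with $k_{\R^n\setminus\{0\}}$ given explicitly by (\ref{kmetric}). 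The denominator is thus completely explicit in terms of $|u|$, $|v|$ and the angle $\measuredangle(u,0,v)$.

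Next I would analyze the numerator. The extremal configuration in Kl\'en's result is attained in the limit of two nearby points on a ray through the origin (the angle $\theta \to 0$), so I expect the supremum here is also approached along such degenerate configurations, i.e. as $v \to u$ with $u, v, 0$ collinear; in that limit $k_{\R^n\setminus\{0\}}(u,v) \sim |\log(|u|/|v|)| \sim |u-v|/|u|$ and $q(u+z,v+z) \sim |u-v| \cdot |{\nabla}\,(\text{something})|$, reducing the whole ratio to maximizing a one-variable function of $t = |u|$ (the common direction of $u$ being a second parameter). Concretely, parametrize $u = t\omega$ for a unit vector $\omega$; then the infinitesimal version of the ratio becomes
\[
  \frac{t\,|\partial_t\, q(t\omega + z,\,\cdot\,)|}{1} \quad\text{evaluated appropriately},
\]
and one computes that $q(t\omega+z, s\omega+z)/|\log(t/s)|$ tends, as $s\to t$, to $t\,|\tfrac{d}{dt}\log|t\omega+z|| \cdot \tfrac{|t\omega+z|}{\sqrt{1+|t\omega+z|^2}}$-type expression — more carefully, since $q(a,b) = |a-b|/\sqrt{(1+|a|^2)(1+|b|^2)}$ and $|a-b| = |u-v|$ here, the limiting ratio is $\frac{t}{\sqrt{1+|t\omega+z|^2}}$ as $s \to t$. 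So the supremum reduces to
\[
  \sup_{t>0,\ \omega\in S^{n-1}} \frac{t}{\sqrt{1 + t^2 + 2t\,(\omega\cdot z) + |z|^2}}.
\]
The inner supremum over $\omega$ is clearly obtained by taking $\omega \cdot z = -|z|$, i.e. $u$ antipodal to $z$, giving $\sup_{t>0} t/\sqrt{1 + (t-|z|)^2}$, and a single-variable calculus computation (setting derivative to zero) yields the critical value $t = (1+|z|^2)/|z|$ and the supremum value $(|z| + \sqrt{1+|z|^2})/2$. I would then need to verify that this heuristic is rigorous: (a) that the degenerate (small-angle) configurations really do furnish the supremum, by showing that for fixed nonzero angle the ratio is strictly smaller — this follows because $k_{\R^n\setminus\{0\}}$ grows like $\sqrt{\theta^2 + (\log|x|/|y|)^2}$ while $q$ stays bounded by roughly $|x-y|$, and along a chord the Euclidean length is comparable to the radial part only; and (b) that the supremum is not attained (it is approached in the limit $s\to t$), which matches the phrasing of the theorem as a supremum.

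\textbf{Main obstacle.} The hard part will be part (a): rigorously showing that no configuration with $u, v, 0$ \emph{not} collinear, or with $u, v$ not of the specific extremal form, beats the value $(|z|+\sqrt{1+|z|^2})/2$. One clean way is to fix $|u| = \rho_1$, $|v| = \rho_2$ and the angle $\theta$, and bound $q(u+z,v+z) = |u-v|/\sqrt{(1+|u+z|^2)(1+|v+z|^2)}$ from above by choosing the worst position (which pushes $u+z$, $v+z$ as close to the origin as possible, i.e. $\omega\cdot z = -|z|$), then compare with $k_{\R^n\setminus\{0\}}(u,v) = \sqrt{\theta^2 + \log^2(\rho_1/\rho_2)} \ge |\log(\rho_1/\rho_2)|$, reducing to a two-variable inequality in $\rho_1, \rho_2$ that can be handled by the substitution $\rho_i = (1+|z|^2)/|z| \cdot e^{\pm s}$ or by a direct convexity/monotonicity argument. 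A secondary technical point is justifying the reflection/reduction: since $q$ is a genuine metric on $\overline{\R^n}$ and finite, there are no issues at infinity, so the supremum is over a bounded-ratio quantity and the $\limsup$ analysis suffices. I would also double-check the boundary case $z = 0$, where the formula gives $1/2$, recovering Kl\'en's theorem as a sanity check.
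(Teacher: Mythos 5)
Your overall strategy (translate so the puncture is at the origin, identify the extremal configuration as a degenerate collinear pair on the ray through $z$, and then argue that non-collinear configurations cannot beat it) is reasonable, but the proposal has a genuine gap exactly where you flag the difficulty, and the sketched fix would not work. For the upper bound you propose to discard the angle by using $k_{\R^n\setminus\{0\}}(u,v)\ge|\log(\rho_1/\rho_2)|$ while keeping $|u-v|$ (which grows with $\theta$) in the numerator; this fails outright when $\rho_1=\rho_2$ and $\theta>0$, where your lower bound for $k$ is $0$ but $q>0$. The missing ingredient is a sharp joint comparison of the chordal numerator and the full quasihyperbolic denominator: the paper invokes Kl\'en's Lemma 3.7(i), which gives
$$\sqrt{\frac{\rho_1^2+\rho_2^2-2\rho_1\rho_2\cos\theta}{\theta^2+\log^2(\rho_1/\rho_2)}}\le\frac{\rho_1-\rho_2}{\log\rho_1-\log\rho_2},$$
i.e.\ the ratio is dominated by the logarithmic mean of $|x-z|$ and $|y-z|$ uniformly in $\theta$. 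After that the proof is short: the logarithmic mean is at most the arithmetic mean, $|x-z|+|y-z|\le|x|+|y|+2|z|$, and an elementary two-variable maximization (Lemma \ref{myle2}) of $(|x|+|y|+2|z|)^2/((1+|x|^2)(1+|y|^2))$ gives exactly $(|z|+\sqrt{1+|z|^2})^2$. Without Lemma 3.7(i) or an equivalent, your step (a) is an unproven claim, and your "convexity/monotonicity" suggestion does not address the $\rho_1=\rho_2$ degeneracy.

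There is also a computational slip in your lower-bound analysis: as $s\to t$ both chordal factors tend to $\sqrt{1+|t\omega+z|^2}$, so the limiting ratio along the ray is $t/(1+|t\omega+z|^2)$, not $t/\sqrt{1+|t\omega+z|^2}$. With $\omega=-z/|z|$ this is $t/(1+(t-|z|)^2)$, whose maximum is attained at $t=\sqrt{1+|z|^2}$ (matching the paper's extremal point $x,y\to(|z|-\sqrt{1+|z|^2})z/|z|$) with value $(|z|+\sqrt{1+|z|^2})/2$; your stated critical point $t=(1+|z|^2)/|z|$ belongs to the erroneous formula and at that point your formula evaluates to $\sqrt{1+|z|^2}$, not the claimed supremum, so the numbers you report are mutually inconsistent. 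The lower bound is salvageable once this is corrected, but the upper bound remains the substantive missing piece.
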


For the proof of Theorem \ref{kqratio}, we need the following technical lemma.

\begin{lemma} \label{myle2}
For given $a\geq0$,
$$\max_{r,s\geq0}\frac{(r+s+a)^2}{(1+r^2)(1+s^2)}=\left(\frac{a+\sqrt{4+a^2}}{2}\right)^2.$$
\end{lemma}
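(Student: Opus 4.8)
The plan is to reduce this to a calculus problem by exploiting symmetry, and then to locate the critical point explicitly. Write $F(r,s) = (r+s+a)^2/((1+r^2)(1+s^2))$ for $r,s\ge 0$. First I would check that a maximum is attained: as $r\to\infty$ (or $s\to\infty$) the numerator grows like $r^2$ while the denominator grows like $r^2$ times a bounded-below factor, and in fact $F\to 0$ unless the other variable is also sent to infinity in a controlled way; a short estimate shows $F(r,s)\le (r+s+a)^2/((1+r^2)(1+s^2))\to 0$ as $|(r,s)|\to\infty$, so the supremum over the closed quadrant is a genuine maximum, attained either at an interior critical point or on the boundary $r=0$ or $s=0$.

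Next I would observe that $F$ is symmetric in $r$ and $s$, so it is natural to expect the maximizer to satisfy $r=s$. To justify this I would compute $\partial F/\partial r$ and $\partial F/\partial s$; setting both to zero, the equations $\partial_r F = 0$ and $\partial_s F = 0$ read (after clearing the common positive factor $2(r+s+a)/((1+r^2)(1+s^2))$)
\[
  (1+r^2) - r(r+s+a) = 0, \qquad (1+s^2) - s(r+s+a) = 0.
\]
Subtracting these gives $(1 + r^2 - 1 - s^2) = (r-s)(r+s+a)$, i.e. $(r-s)(r+s) = (r-s)(r+s+a)$, which forces $(r-s)\cdot a = 0$. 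Hence if $a>0$ the only interior critical point has $r=s$, and for $a=0$ one still checks $r=s$ is where the max occurs (or handles $a=0$ directly). Setting $r=s=t$ in either equation yields $1 + t^2 = t(2t+a)$, i.e. $t^2 + at - 1 = 0$, whose positive root is $t = (-a+\sqrt{a^2+4})/2$. Substituting back, $r+s+a = 2t+a = \sqrt{a^2+4}$ and $1+t^2 = t(2t+a) = t\sqrt{a^2+4}$, so
\[
  F(t,t) = \frac{(2t+a)^2}{(1+t^2)^2} = \frac{a^2+4}{t^2(a^2+4)} = \frac{1}{t^2} = \left(\frac{2}{-a+\sqrt{a^2+4}}\right)^2 = \left(\frac{a+\sqrt{a^2+4}}{2}\right)^2,
\]
using $(\sqrt{a^2+4}-a)(\sqrt{a^2+4}+a) = 4$ to rationalize. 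Finally I would dispatch the boundary $s=0$: there $F(r,0) = (r+a)^2/(1+r^2)$, a one-variable function whose maximum is $1 + a^2$ (attained at $r=1/a$ for $a>0$), and one checks $1+a^2 \le \big((a+\sqrt{a^2+4})/2\big)^2$ since the right side equals $1 + a^2/2 + a\sqrt{a^2+4}/2 \ge 1 + a^2/2 + a\cdot a/2 = 1+a^2$.

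The only mildly delicate point is the bookkeeping in the two partial-derivative equations and confirming the interior critical point is a maximum rather than a saddle; the symmetry argument above cleanly reduces the interior analysis to the single equation $t^2+at-1=0$, and comparison with the boundary values finishes it, so I expect no serious obstacle. An alternative, perhaps slicker, route is to substitute $r=\tanh u$, $s = \tanh v$ style trigonometric substitutions — or better, to note $F(r,s) = \big((r+s+a)/(1+rs)\big)^2 \cdot (1+rs)^2/((1+r^2)(1+s^2))$ and use the tangent-addition structure — but the direct Lagrange-type computation is transparent enough that I would present that.
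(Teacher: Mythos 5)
Your approach is essentially the paper's own: find the interior critical point, which by symmetry lies on the diagonal, solve the resulting quadratic $t^2+at-1=0$, and compare the critical value with the boundary maximum $1+a^2$. All of those computations are correct, and your derivation that the two critical-point equations force $r=s$ for $a>0$ (by subtracting them to get $(r-s)a=0$) is cleaner and more explicit than what the paper writes.

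However, the step justifying that the supremum is attained contains a false claim: it is \emph{not} true that $F(r,s)\to 0$ as $|(r,s)|\to\infty$. For instance $F(r,0)=(r+a)^2/(1+r^2)\to 1$ as $r\to\infty$, and more generally $F(r,s)\to 1/(1+s^2)$ as $r\to\infty$ with $s$ fixed; $F$ tends to $0$ only when both variables tend to infinity. The argument is easily repaired: the limit superior of $F$ at infinity is at most $1$, and $1\le\big((a+\sqrt{4+a^2})/2\big)^2$ with strict inequality for $a>0$, so for $a>0$ the supremum exceeds every limiting value at infinity and is therefore attained at a finite point, where your critical-point and boundary analysis applies. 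For $a=0$ the claimed maximum equals $1$ and is attained at $(1,1)$, which closes that case directly; note that for $a=0$ the critical set is actually the whole hyperbola $rs=1$, on which $F\equiv 1$, so the parenthetical ``one still checks $r=s$'' should be replaced by this observation. With that correction the proof is complete.
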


\begin{proof}
Let $$f(r,s)=\frac{(r+s+a)^2}{(1+r^2)(1+s^2)},\quad r,s\in[0,+\infty).$$
It is clear that $f(r,+\infty)=f(+\infty,r)=1/(1+r^2)$ for each $r\in[0,+\infty)$.
A simple calculation implies that $$\max_{r\in[0,+\infty)}f(0,r)=\max_{r\in[0,+\infty)}f(r,0)=f(1/a,0)=1+a^2.$$
By differentiation, $$\frac{\partial f}{\partial r}=0=\frac{\partial f}{\partial s}\Rightarrow r=s=\frac{-a+\sqrt{4+a^2}}{2}\triangleq r_0.$$
We have $$f(r_0,r_0)=\left(\frac{a+\sqrt{4+a^2}}{2}\right)^2\geq1+a^2.$$
Since $f$ is differentiable in $[0,+\infty)\times[0,+\infty)$, $f(r,s)\leq f(r_0,r_0)$ for all $(r,s)\in[0,+\infty)\times[0,+\infty)$.
\end{proof}

Now we turn to the proof of Theorem \ref{kqratio}.

\begin{proof}[Proof of Theorem \ref{kqratio}]
By  (\ref{kmetric}) and (\ref{qmetric}),
\begin{eqnarray*}
\frac{q(x,y)}{k(x,y)}&=&\frac{|x-y|}{\sqrt{1+|x|^2}\sqrt{1+|y|^2}\sqrt{\theta^2+\log^2(|y-z|/|x-z|)}}\\
&=&\sqrt{\frac{|x-z|^2+|y-z|^2-2|x-z||y-z|\cos\theta}{\theta^2+\log^2(|y-z|/|x-z|)}}\frac1{\sqrt{1+|x|^2}\sqrt{1+|y|^2}}\\
&\leq&\frac{|x-z|-|y-z|}{\log|x-z|-\log|y-z|}\frac1{\sqrt{1+|x|^2}\sqrt{1+|y|^2}}\\
&\leq&\frac{|x-z|+|y-z|}{2}\frac1{\sqrt{1+|x|^2}\sqrt{1+|y|^2}}\\
&\leq&\frac{|x|+|y|+2|z|}{2\sqrt{1+|x|^2}\sqrt{1+|y|^2}}\\
&\leq&\frac{|z|+\sqrt{1+|z|^2}}{2},
\end{eqnarray*}
where the first inequality follows from \cite[Lemma 3.7 (i)]{k}, the second inequality is the mean inequality $(a-b)/(\log a-\log b)\leq (a+b)/2$
and the last one follows from Lemma \ref{myle2}. From the above chain of inequalities it is easy to see that the upper bound $(|z|+\sqrt{1+|z|^2})/2$ can be obtained when
$x,y\to(-\sqrt{1+|z|^2}+|z|){z}/{|z|}$.
\end{proof}

\medskip

\subsection*{Acknowledgments}
The research of Matti Vuorinen was supported by the Academy of Finland, Project 2600066611.
Xiaohui Zhang is indebted to the CIMO of Finland for financial support, Grant TM-09-6629. The authors
would like to thank Toshiyuki Sugawa for his useful comments on the manuscript, especially on Proposition \ref{qh4conformal}, and the referee for a number of constructive and illuminating suggestions.


\end{document}